\renewcommand\normalsize{%
    \@setfontsize\normalsize{11.7}{14pt plus .3pt minus .3pt}%
    \abovedisplayskip 10\p@ \@plus4\p@ \@minus4\p@
    \abovedisplayshortskip 6\p@ \@plus2\p@
    \belowdisplayshortskip 6\p@ \@plus2\p@
    \belowdisplayskip \abovedisplayskip}
\renewcommand\small{%
    \@setfontsize\small{9.5}{12\p@ plus .2\p@ minus .2\p@}%
    \abovedisplayskip 8.5\p@ \@plus4\p@ \@minus1\p@
    \belowdisplayskip \abovedisplayskip
    \abovedisplayshortskip \abovedisplayskip
    \belowdisplayshortskip \abovedisplayskip}
\renewcommand\footnotesize{%
    \@setfontsize\footnotesize{8.5}{9.25\p@ plus .1pt minus .1pt}
    \abovedisplayskip 6\p@ \@plus4\p@ \@minus1\p@
    \belowdisplayskip \abovedisplayskip
    \abovedisplayshortskip \abovedisplayskip
    \belowdisplayshortskip \abovedisplayskip}
\newcounter{alph}
\newtheorem{theo}[alph]{Theorem}
\newtheorem{propo}[alph]{Proposition}
\newtheorem{coro}[alph]{Corollary}
\numberwithin{equation}{section}
\newtheorem{cor}[equation]{Corollary}
\newtheorem{lem}[equation]{Lemma}
\newtheorem{prop}[equation]{Proposition}
\newtheorem{thm}[equation]{Theorem}
\theoremstyle{definition}
\newtheorem{dfn}[equation]{Definition}
\newtheorem{exa}[equation]{Example}
\newtheorem{rem}[equation]{Remark}
\newtheorem{prob}[alph]{Problem}
\DeclareMathOperator{\ad}{ad}
\DeclareMathOperator{\Ad}{Ad}
\DeclareMathOperator{\cus}{cus}
\DeclareMathOperator{\diam}{diam}
\DeclareMathOperator{\geo}{geo}
\DeclareMathOperator{\Hdim}{dim_{Hd}}
\DeclareMathOperator{\rad}{rad}
\DeclareMathOperator{\Sl}{SL}
\DeclareMathOperator{\supp}{supp}
\DeclareMathOperator{\insize}{insize}
\def\C{\mathbb C}
\def\E{\mathbb E}
\def\F{\mathbb F}
\def\H{\mathbb H}
\def\K{\mathbb K}
\def\N{\mathbb N}
\def\O{\mathbb O}
\def\p{\mathbb P}
\def\R{\mathbb R}
\def\Z{\mathbb Z}
\def\ve{\varepsilon}
\def\vf{\varphi}
\def\la{\langle}
\def\ra{\rangle}
\begin{document}

\title[Martin boundaries of diffusions and random walks]{Martin boundaries of diffusions and random walks on hyperbolic spaces}
\author{Werner Ballmann}
\address{WB: Max Planck Institute for Mathematics, Vivatsgasse 7, 53111 Bonn}
\email{hwbllmnn\@@mpim-bonn.mpg.de}
\author{Debanjan Nandi}
\address{DN: Stat-Math unit, Indian Statistical Institute, 203 B.T. Road, Kolkata-700108}
\email{debanjannandi42@gmail.com}
\author{Panagiotis Polymerakis}
\address{PP: Department of Mathematics, University of Thessaly, 3rd km Old National Road Lamia–Athens, 35100, Lamia, Greece}
\email{ppolymerakis\@@uth.gr}

\begin{abstract}
We discuss certain kinds of diffusions on hyperbolic spaces, associated random walks on discrete groups of iso\-metries of the latter, and their Martin boundaries.    
\end{abstract}

\date{September 17, 2025}

\subjclass[2010]{53C35, 58J65, 60G50}
\keywords{Diffusion, random walk, Brownian motion, Sullivan process, Ancona process, Martin boundary, Gromov hyperbolic space, symmetric space, bisector}

\thanks{\emph{Acknowledgments.}
We thank Anna Erschler for pointing out to the third author that points in the Martin boundaries of random walks may be super-harmonic, but not harmonic.
Her note initiated the work on \cite{BPe} and on this paper.
We are grateful to the Max Planck Institute for Mathematics
and the Hausdorff Center for Mathematics in Bonn for their support and hospitality. 
D.\,N.\;gratefully acknowledges INSPIRE faculty fellowship DST/INSPIRE/04/2023/001952.}
\maketitle


\setcounter{tocdepth}{1}
\tableofcontents

\section*{Introduction}

Martin boundaries are asymptotic objects associated to transient random processes on topological spaces.
In contrast to Poisson boundaries, which are measure theoretic objects, Martin boundaries are of a topological nature.
They are quite easy to define, but difficult to determine.
In general, they do not seem to be stable, which may be part of the problem behind their determination.

Spaces of interest in this article are \emph{open Riemannian manifolds $H$},
that is, non-compact, complete, and connected Riemannian manifolds,
and discrete subsets $X\subseteq H$, as a rule invariant under a properly discontinuous group $\Gamma$ of isometries of $H$.
Specific examples are \emph{Hadamard manifolds}, that is, complete and simply connected Riemannian manifolds of non-positive (sectional) curvature, where $\Gamma$ is the group of covering transformations for a subcover $M=\Gamma\backslash H$ and $X$ is the preimage of a point or a discrete subset of $M$.
Frequently, we assume that the curvature of $H$ is pinched between negative bounds.
The most prominent examples are the hyperbolic spaces $H=H_\F^k$, where $\F\in\{\R,\C,\H,\O\}$.
Our focus is on relations between the geometric boundary $\partial_{\geo}H$ of $H$ and the Martin boundaries $\partial_{\mathcal{S}}H$  and $\partial_{\mu}X$ of certain transient diffusions $\mathcal{S}$ on $H$ and random walks $\mu$ on $X$, specifically those, where $\mathcal{S}$ and $\mu$ are related by the Lyons-Sullivan discretization procedure.
For definitions, we refer the reader to Sections \ref{secmark} and \ref{ssecls}.

If $H$ is a Hadamard manifold with pinched negative curvature, then Brownian motion $\mathcal B$ on $H$ is transient.
It was shown by Anderson and Schoen \cite{AS}, that the identity of $H$ extends continuously to a homeomorphism between the geometric compactification $\bar H=H\cup\partial_{\geo}H$ and the Martin compactification $\hat H=H\cup\partial_{\mathcal B}H$ of $H$.
Different approaches to this were given by Ancona and Kifer \cite{An,Ki}.

Recall that Brownian motion is the diffusion associated with the Laplace operator $\Delta=\nabla^*\nabla$ (on functions).
Ancona considered also more general operators, among them diffusion operators of the form
\begin{align}\label{diffoper}
    D = \Delta - 2\nabla\ln\vf,
\end{align}
where $\vf$ is a positive $\lambda$-harmonic function.
The diffusion $\mathcal{S}$ associated to such an operator $D$ plays a prominent role in Sullivan's work on positivity in Riemannian geometry \cite{Su3};
we call $\mathcal{S}$ \emph{Sullivan's $\vf$-process} or \emph{$\vf$-process}.
It is complete if and only if, for all $x\in H$ and $t>0$,
\begin{align}\label{complete}
	\vf(x) = \int_H e^{\lambda t} p(t,x,y) \vf(y) d\mathcal{L}(y),
\end{align}
where $p=p(t,x,y)$ denotes the fundamental solution of the heat equation and $\mathcal{L}$ Riemannian volume.
Completeness of $\vf$ implies that the measure $\vf^2\mathcal{L}$ is invariant under the $\vf$-process.
The transition kernel of the $\vf$-process is given by 
\begin{align}\label{sullivanpt}
    e^{\lambda t} p(t,x,y)\vf(y)/\vf(x).
\end{align}

Let  $\lambda_0(H)$ be the bottom of the $L^2$-spectrum of the Laplacian on $H$.
Recall that $\lambda_0(H)$ is the maximal $\lambda$, such that there are positive $\lambda$-harmonic functions on $H$.
For each $\lambda<\lambda_0(H)$, the bounded operator $(\Delta-\lambda)^{-1}$ on $L^2(H)$ is given by the \emph{Green kernel}
\begin{align}\label{greenker}
	G_\lambda(x,y) = \int_0^\infty e^{\lambda t}p(t,x,y)dt.
\end{align}
For all $x\ne y$ in $H$, the integral in \eqref{greenker} converges for all $\lambda<\lambda_0(H)$ and diverges for all $\lambda>\lambda_0(H)$.
There is a dichotomy at $\lambda=\lambda_0(H)$: Either the integral converges for all $x\ne y$ or it diverges for all such pairs \cite[Theorem 2.6]{Su3}.
Following Sullivan, we say that $H$ is \emph{$\lambda_0$-recurrent}, if it diverges.
Then all positive $\lambda_0(H)$-harmonic functions on $H$ are constant multiples of one another \cite[Theorem 2.7]{Su3}.
Moreover, if $H$ carries a square-integrable positive $\lambda_0(H)$-harmonic function, then $H$ is $\lambda_0$-recurrent.
If $H$ is $\lambda_0$-recurrent and $\vf$ is a positive $\lambda_0(H)$-harmonic function, then the $\vf$-process is complete, recurrent, and preserves the measure $\vf^2\mathcal L$ \cite[Theorem 2.10]{Su3}.
If $\vf$ is square-integrable, then the latter measure is finite.
By what we said above, the $\vf$-process does not depend on the choice of $\vf$ (for $\lambda=\lambda_0(H)$).
Clearly, if $\lambda_0(H)=0$, then $\vf$ is constant and $\mathcal S$ equals Brownian motion.

The analogous results hold for $M=\Gamma\backslash H$.
Since lifts of positive $\lambda$-harmonic functions on $M$ are positive $\lambda$-harmonic functions on $H$,
we have $\lambda_0(M)\le\lambda_0(H)$ by the above characterization of $\lambda_0$ by positive $\lambda$-harmonic functions.
If $\vf$ is the lift of a positive $\lambda$-harmonic function $\psi$ on $M$ to $H$, then the $\vf$-process on $H$ is the lift of the $\psi$-process on $M$.
In particular, if $M$ is $\lambda_0$-recurrent (where here $\lambda_0=\lambda_0(M)\le\lambda_0(H)$ by our terminology) and $\psi$ is a positive $\lambda_0(M)$-harmonic function on $M$, then $\Gamma$-invariant positive $\lambda$-harmonic functions on $H$ are constant multiples of the lift $\vf$ of $\psi$ to $H$.
The associated $\Gamma$-invariant $\vf$-process is complete and preserves the measure $\vf^2\mathcal{L}$ on $H$.

Let $\mathcal{S}$ be a complete and transient $\vf$-process on $H$.
Let $X$ be a $\Gamma$-invariant discrete subset of $H$ and $\mu=(\mu_x)_{x\in X}$ be a $\Gamma$-equivariant family of probability measures on $X$.
Assume that the associated random walk on $X$, also denoted  by $\mu$, is irreducible and transient.
Axiomatizing the corresponding notion from \cite{BP}, we say that $\mu$ is an \emph{FLS-restriction} of $\mathcal S$ to $X$ if the Martin kernels $K$ of $\mathcal{S}$ and $k$ of $\mu$ with origin some $o\in X$ satisfy $k(x,y)=K(x,y)$ for all $x,y\in X$ with $y\neq x, o$.
Here FLS stands for Furstenberg \cite{Fu}, Lyons, and Sullivan \cite{LS}.

To state our first result, we need some further terminology.
For a Hada\-mard manifold $H$ of pinched negative curvature and a subset $X$ of $H$, we denote by $\Lambda_X$ the set of limit points of $X$ in $\partial_{\geo}H$.
If $X=\Gamma x$ is an orbit of $\Gamma$, we write $\Lambda_\Gamma$ for $\Lambda_{\Gamma x}$ since the latter does not depend on the choice of $x$ and call $\Lambda_\Gamma$ the \emph{limit set of $\Gamma$}.

\begin{theo}\label{main1}
Let $M=\Gamma\backslash H$ be $\lambda_0$-recurrent and $X$ an orbit of $\Gamma$.\\
1) Assume that the associated $\Gamma$-invariant $\vf$-process $\mathcal{S}$ on $H$ is transient.
Then $X$ admits an irreducible symmetric and $\Gamma$-invariant FLS-restriction $\mu$\ of $\mathcal S$ to $X$.
If the Ricci curvature of $H$ is bounded from below and $M$ admits a square-integrable $\lambda_0(M)$-eigenfunc\-tion, then $\mu$ can be chosen to have finite first moment and finite entropy. \\
2) Assume that both of the following two conditions are satisfied, \\
$\phantom{xx}$ i)
 $H$ is a Hadamard manifold with pinched negative curvature, \\
$\phantom{xx}$ ii)
$\lambda_0(M)<\lambda_0(H)$ or $H$ is a symmetric space. \\
Then $\mathcal{S}$ is transient and any FLS-restriction $\mu$ to $X$ induces a homeomorphism $\Lambda_\Gamma\to\partial_\mu X$, which extends the identity of $X$ continuously.
\end{theo}

The existence of FLS-restrictions is obtained by a refined version of the discretization procedure of Brownian motion by Lyons and Sullivan; see \cite{LS} and \cite[Theorem 3.1]{BL2}.
The name of Furstenberg occurs here because he was the first to obtain discretizations of Brownian motion, in his case for lattices in $\Sl(n,\R)$; see \cite[Sections 5 and 6]{Fu}.
The last assertion of \cref{main1} may be viewed as a stability result for Martin boundaries.

\cref{main1} extends \cite[Theorem 3.2]{BL2} and follows from our Theorems \ref{Martin} and \ref{fm} in the body of the text.
Note that the last step of the proof of finiteness of first moment in \cite{BL2} is incomplete.
Assuming geometrical finiteness in the sense of Bowditch \cite{Bo}, the assertion about the finiteness of first moment will be improved, in \cite{Nan2}, to finiteness of exponential moments. 

The \emph{Poincar\'e series} of $\Gamma$ (as group of isometries of $H$) is given by
\begin{align}
    P(x,y,s) = \sum_\Gamma e^{-s\rho(x,gy)}
\end{align}
The \emph{critical exponent} of $\Gamma$ is defined to be the real number $\delta(\Gamma)$ such that $P(x,y,s)$ converges for $s>\delta(\Gamma)$ and diverges for $s<\delta(\Gamma)$, where $x,y$ is some--or any--pair of points in $H$.
At $s=\delta(\Gamma)$, $P(x,y,s)$ might converge or diverge.
We say that $\Gamma$ is of \emph{divergence type} if it diverges.

Let now $H=H_\F^k$ be the hyperbolic space over $\F$ as above with dimension $m=kd$, $d=\dim_{\R}\F$, endowed with the standard (Fubini-Study) metric with maximal curvature $-1$.
Recall that $\lambda_0(H)=h^2/4$, where
\begin{align}\label{volentin}
    h = m - d + 2(d-1) = m + d - 2 > 0
\end{align}
denotes the asymptotic rate of volume growth of $H$.
If $\delta(\Gamma)\ge h/2$, then
\begin{align}
    \lambda_0(\Gamma\backslash H)=\delta(\Gamma)(h-\delta(\Gamma)).
\end{align}
Furthermore, $\Gamma\backslash H$ is $\lambda_0$-recurrent if $\Gamma$ is of divergence type.
These results are due to Sullivan in the real hyperbolic case \cite[Corollary 2.18]{Su3}.
Mutatis mutandis, his proof generalizes to cover all four types of hyperbolic spaces $H=H_\F^k$.

\begin{coro}\label{main3}
Let $H=H_\F^k$ and $\Gamma$ be of divergence type with $\delta(\Gamma)\ge h/2$.
Then $\Gamma$ admits an irreducible symmetric random walk with Martin boundary $\Lambda_\Gamma$.
\end{coro}

\cref{main3} provides a plethora of examples of Martin boundaries for random walks on groups.
It extends the main geometric result of \cite{BL2}, namely the corollary in the introduction there which is concerned with non-compact hyperbolic surfaces of finite area and their fundamental groups, which are finitely generated free groups.

Recall that for $H=H_\F^k$, the critical exponent $\delta(\Gamma)=\Hdim\Lambda_\Gamma^{\rad}$, the Hausdorff dimension of the \emph{radial limit set} $\Lambda_\Gamma^{\rad}$ of $\Gamma$ (this follows from bounds on the Patterson-Sullivan measures of shadows of balls \cite[Theorem 4.2]{Sch} and an approximation argument \cite[Proposition 4.10]{SV},  \cite[Theorem 0.3]{Nan}).
Moreover, if $\Gamma$ is geometrically finite (in the sense of Bowditch \cite{Bo}), then $\Lambda_\Gamma=\Lambda_\Gamma^{\rad}\cup P$, where $P$ is the set of parabolic fixed points of $\Gamma$ \cite[Theorem 2.3]{CI}.
Since $P$ is countable, this implies that
\begin{align}\label{hauscri}
    \delta(\Gamma)=\Hdim\Lambda_\Gamma^{\rad}=\Hdim\Lambda_\Gamma.
\end{align}
Furthermore, $\Gamma$ is of divergence type \cite[Proposition 3.7]{CI}.
The following, except for the last assertion, is now an immediate consequence of the above.

\begin{coro}\label{main3f}
Let $H=H_\F^k$ and $\Gamma$ be geometrically finite with $\Hdim\Lambda_\Gamma\ge h/2$.
Then $\Gamma$ admits an irreducible symmetric random walk with Martin boundary $\Lambda_\Gamma$.
Furthermore, if $\Hdim\Lambda_\Gamma>h/2$, then the random walk can be chosen of finite first moment and finite entropy.
\end{coro}

For the last assertion, we need that positive $\lambda_0$-harmonic functions on $M=\Gamma\backslash H$ are square-integrable.
Now \cite[Theorem B]{BP2} asserts that the bottom of the essential spectrum of $M$ is at least $\lambda_0(H)=h^2/4$, hence it is larger than $\lambda_0(M)=\delta(\Gamma)(h-\delta(\Gamma))$,
where we use that $\delta(\Gamma)=\Hdim\Lambda_\Gamma>h/2$.
Hence $\lambda_0(M)$ belongs to the discrete spectrum of $M$, and hence $\lambda_0$-harmonic functions are square-integrable.

Assume now again that $H$ is a Hadamard manifold with pinched negative curvature.

\begin{theo}\label{main5}
Suppose that $\Gamma$ is geometrically finite on $H$ and that $\xi\in\partial_{\geo}H$ is a parabolic fixed point of $\Gamma$.
Let $X=\Gamma x$ for some $x\in H$ and $\mu$ be an FLS-restriction of Sullivan's process $\mathcal S$ to $X$.
Then $k(.,\xi)=K(.,\xi)|_X$ is not $\mu$-harmonic.
\end{theo}

\begin{prob}
Characterize the points $\xi\in\Lambda_\Gamma$ for which $k(.,\xi)$ is $\mu$-harmonic.
\end{prob}

If $\Gamma$ is geometrically finite, then the complement $\Lambda_\Gamma^{\rad}$ of the parabolic fixed points in $\Lambda_\Gamma$ may be the set for which this is true.
At this point, we are able to handle the following special case.

\begin{theo}\label{main7}
Suppose that $\Gamma \backslash H$ has finite volume. Let $X$ be an orbit of $\Gamma$ and $\mu$ the LS-discretization of Brownian motion to $X$, associated to appropriate LS-data.
Then, for any $\xi \in \partial_{\geo}H$, we have that $k(.,\xi)$  is $\mu$-harmonic if and only if $\xi$ is not a parabolic fixed point of $\Gamma$.
\end{theo}

Our approach includes an application to a large class of possibly infinitely generated groups,
where the underlying spaces are Gromov hyperbolic spaces, not necessarily manifolds or even geodesic.
One of our main results in this direction is concerned with relatively hyperbolic groups (after Gromov \cite[Section 8.6]{Gro}, where we follow the exposition of Bowditch \cite{Bo1}).

\begin{theo}\label{bowbou}
Let $\Gamma$ be a relatively hyperbolic group  with finitely generated virtually nilpotent parabolic subgroups.
Let $\Gamma'\trianglelefteq \Gamma$ be a normal subgroup such that $\Gamma'\cap P$ has finite index in $P$, for any maximal parabolic subgroup $P$ of $\Gamma$.
If $\Gamma/\Gamma'$ is either finite, or virtually $\Z$, or virtually $\Z^2$, 
then there is an irreducible symmetric random walk on $\Gamma'$ with Martin boundary the Bowditch boundary of $\Gamma$.
\end{theo}

For example, let $\F_2=\la a,b\ra$ be the free group with two generators and $G=[\mathbb{F}_2,\mathbb{F}_2]$ its commutator subgroup, an infinitely generated free group.
Now $\F_2/G\cong\Z^2$, and hence we get a random walk $\mu$ in $G$ such that $\partial_\mu G=\partial_{\geo}\mathbb{F}_2$, the Gromov boundary of $\mathbb{F}_2$ with respect to the word metric.
However, if one considers $\F_2$ with the relatively hyperbolic structure $(\F_2,\la[a,b]\ra)$ corresponding to the action of $\F_2$ on $H_{\R}^2$ with quotient a hyperbolic surface of genus one with one cusp, then one obtains a different random walk $\mu'$ on $G$ such that $\partial_{\mu'}G=\partial_{\geo}(\F_2,\la[a,b]\ra)$, the Bowditch boundary, which in this case is a circle.

\cref{bowbou} is proved in \cref{subrelhyp}, where we also provide some definitions and details.
The case $\Gamma'=\Gamma$ in \cref{bowbou} is contained in \cite{Nan}.

In the text, we discuss versions of Theorems \ref{main1} and \ref{main5} for Ancona processes--in the sense of Definitions \ref{Proc} and \ref{Procp}--on Gromov hyperbolic spaces (not necessarily geodesic); cf.\,Theorems \ref{Martin} and \ref{nonharm}. 
The proofs of these two theorems are much more elementary in the case of the hyperbolic spaces $H_\F^k$.
The main reason for this is that then Martin kernels $K(.,\xi)$ are constant along horospheres centered at $\xi$,
for all $\xi$ in $\partial_{\geo}H$.
We indicate the corresponding easier proofs, using bisectors.
The latter are fundamental objects in geometry, which we now discuss shortly.

For two points $x,y$ in a metric space, their \emph{bisector} is the set of points in the space of equal distance to $x$ and $y$.
Here we need a geometric description of bisectors of pairs of different points in hyperbolic spaces.
In real hyperbolic geometry, the bisector is the hyperbolic hyperplane through the midpoint between the two points
and perpendicular to the geodesic segment connecting them.
In complex hyperbolic geometry, the situation is more complex, and bisectors are described in Goldman's \cite[Section 5]{Go}.
There is a corresponding description for quaternionic hyperbolic spaces and the hyperbolic octonionic plane,
and this might be known to experts.
However, we were not able to locate a reference for the latter two cases and discuss bisectors in an appendix.
The analog of Goldman's description is as follows.

\begin{propo}\label{bise}
Let $H=H_\F^k$ with $\F\in\{\C,\H,\O\}$ and $x\ne y$ be points in $H$.
Let $L$ be the unique totally geodesic real hyperbolic space in $H$ of dimension $\dim_\R\F$ containing $x$ and $y$ of curvature $-4$,
and let $z$ be a point in $H\setminus L$.
Then there is a unique totally geodesic real hyperbolic plane in $H$ of curvature $-1$ containing $x$, $z$, and $\pi z$, where $\pi\colon H\to L$ is the nearest point projection.
In particular,
\begin{align*}
    \cosh(d(x,z)) = \cosh(d(x,\pi z))\cosh(d(z,\pi z)).
\end{align*}
\end{propo}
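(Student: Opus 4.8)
The plan is to deduce the displayed identity from the existence of a single totally geodesic copy of $\R H^2$ of curvature $-1$ through $x$, $z$ and $\pi z$: inside such a plane the three points form a geodesic triangle with a right angle at $\pi z$, and the hyperbolic Pythagorean theorem in the curvature $-1$ plane gives exactly $\cosh d(x,z)=\cosh d(x,\pi z)\cosh d(z,\pi z)$. If $\pi z=x$ this identity is trivially true (both sides equal $\cosh d(x,z)$) and there is nothing to prove about bisectors, so from now on assume $\pi z\neq x$; since any geodesic through the two points $x,\pi z\in L$ stays in $L$ while $z\notin L$, the three points $x$, $z$, $\pi z$ then do not lie on a common geodesic.

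Set $p:=\pi z$. I would first record two soft facts. Because $L$ is complete and totally geodesic and $H$ has negative curvature, $p$ is the unique nearest point of $L$ to $z$ and the segment $[z,p]$ meets $L$ orthogonally at $p$; hence if $u,w\in T_pH$ are the unit vectors at $p$ pointing to $x$ and to $z$ respectively, then $u\in T_pL$ and $w\perp T_pL$, so $\{u,w\}$ is orthonormal and the sides $[p,x]$ and $[p,z]$ meet orthogonally at $p$. Secondly, $L$ is the $\F$-line through $x$ and $y$ — this is part of the classification of totally geodesic submanifolds of $H$ recalled in the appendix — so $T_pL$ equals the $\F$-line $\F u$ through $u$; in particular $w\perp\F u$, i.e.\ $\langle\varphi u,w\rangle=0$ for every imaginary unit $\varphi$ of $\F$.

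The heart of the matter is the claim that the $2$-plane $\sigma:=\R u\oplus\R w\subset T_pH$ is tangent to a totally geodesic $\R H^2$ of curvature $-1$. Since $w\perp\F u$, one checks at once that $\varphi\sigma\perp\sigma$ for every imaginary unit $\varphi$ of $\F$, that is, $\sigma$ is totally real; consequently its sectional curvature attains the pinching minimum $-1$. I would then invoke — from the explicit form of the curvature tensor when $\F\in\{\C,\H\}$, and from the model discussed in the appendix when $\F=\O$ — that a totally real $2$-plane in $H_\F^k$ is a Lie triple system and the associated complete totally geodesic submanifold $P=\exp_p(\sigma)$ is a copy of $\R H^2$ of curvature $-1$. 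Being totally geodesic and tangent to $\sigma$ at $p$, this $P$ contains the geodesic rays issuing from $p$ in the directions $u$ and $w$, hence contains $x$, $z$ and $p=\pi z$, which settles existence. For uniqueness, any totally geodesic $\R H^2$ of curvature $-1$ through $x$, $z$, $\pi z$ contains the segments $[\pi z,x]$ and $[\pi z,z]$, so its tangent plane at $\pi z$ contains $u$ and $w$ and therefore equals $\sigma$; a complete totally geodesic submanifold is determined by any one of its points together with the tangent space there, so the plane must coincide with $P$.

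With $P\cong\R H^2$ of curvature $-1$ and the right angle at $\pi z$ already in hand, the hyperbolic Pythagorean relation for right triangles finishes the proof. I expect the genuine obstacle to be precisely the step ``totally real $2$-plane $\Rightarrow$ Lie triple system spanning an $\R H^2$ of curvature $-1$'', carried out uniformly over $\F\in\{\C,\H,\O\}$ (the intrinsic counterpart of Goldman's spine--slice decomposition \cite{Go}); the octonionic case is the delicate one, owing to the non-associativity of $\O$ and the resulting care needed with the curvature tensor, which is why it is relegated to the appendix. Everything else is standard comparison and symmetric-space geometry.
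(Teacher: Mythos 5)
Your proposal is correct and follows essentially the same route as the paper: reduce to the existence of a totally geodesic $\R H^2$ of curvature $-1$ tangent at $\pi z$ to the plane spanned by the (orthogonal) unit directions towards $x$ and $z$, then apply the hyperbolic Pythagorean theorem; your treatment of the degenerate case $\pi z=x$ and of uniqueness is also fine. The one step you leave as a black box --- that a totally real $2$-plane, i.e.\ one spanned by $v\in T_{\pi z}L$ and $w\perp\F v$, is a Lie triple system exponentiating to a curvature $-1$ copy of $\R H^2$ --- is precisely what the paper's appendix supplies, and it does so uniformly in $\F$ (including $\O$) without invoking the explicit curvature tensor: taking $\mathfrak a=\R v$, the orthogonal complement of $\F v$ in $\mathfrak p$ is the restricted root space $\mathfrak p_\alpha$ for the root with $|\alpha|=1$, and for $Y\in\mathfrak p_\alpha$ the span of $H_\alpha$ and $Y$ is closed under double brackets because $[H_\alpha,Y]$ is a multiple of the related vector $X\in\mathfrak k_\alpha$ and $[X,Y]$ is a multiple of $H_\alpha$; the curvature then comes out as $-|\alpha|^4$ on unit vectors, i.e.\ $-1$ (\cref{hypla}, \cref{hypla2}). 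So your plan is sound, but to turn it into a complete proof you should replace the appeal to ``the explicit form of the curvature tensor'' by this root-space computation (or an equivalent argument), since that is exactly where the quaternionic and octonionic cases are actually settled.
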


The above $L$ is also called an \emph{$\F$-line}.
It is \emph{spanned} by $x\ne y$.
If, in the real hyperbolic case, the geodesic through $x$ and $y$ is understood to be the $\R$-line spanned by $x$ and $y$,
the following consequence holds in all four hyperbolic geometries.

\begin{coro}\label{bise2}
The bisector between $x\ne y$ in $H$ is equal to the preimage, under the nearest point projection onto the $\F$-line $L$ spanned by $x$ and $y$,
of the bisector between $x$ and $y$ in $L$.
\end{coro}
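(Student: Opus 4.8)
The plan is to deduce the Corollary directly from the Pythagorean-type identity of \cref{bise}. Write $\pi\colon H\to L$ for the nearest point projection onto the $\F$-line $L$ spanned by $x$ and $y$. Since $L$ is totally geodesic it is convex in $H$, so for points of $L$ the intrinsic and the ambient distance coincide; in particular the bisector of $x$ and $y$ in $L$ is just $\{v\in L: d(v,x)=d(v,y)\}$, with $d$ the distance function of $H$, and there is no ambiguity in the statement. It therefore suffices to show, for every $z\in H$, that
\begin{align*}
 d(z,x)=d(z,y)\quad\Longleftrightarrow\quad d(\pi z,x)=d(\pi z,y).
\end{align*}

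If $z\in L$, then $\pi z=z$ and the equivalence is trivial, so assume $z\in H\setminus L$. In the cases $\F\in\{\C,\H,\O\}$ I would apply \cref{bise} twice: the $\F$-line spanned by $x$ and $y$ is the same as the one spanned by $y$ and $x$, so applying the proposition once to the pair $(x,y)$ and once to $(y,x)$ gives
\begin{align*}
 \cosh d(z,x)=\cosh d(\pi z,x)\cosh d(z,\pi z),\qquad \cosh d(z,y)=\cosh d(\pi z,y)\cosh d(z,\pi z).
\end{align*}
In the real hyperbolic case $L$ is the geodesic through $x$ and $y$, and one argues the same way directly: through the geodesic $L$ and the point $z\notin L$ there is a totally geodesic real hyperbolic plane $P$ of curvature $-1$ containing $L$ and $z$; the segment $[z,\pi z]$ meets $L$ orthogonally at $\pi z$, the segments from $\pi z$ to $x$ and to $y$ lie in $L\subset P$, and the hyperbolic Pythagorean theorem in $P$ yields the same two identities. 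Since $z\notin L$ we have $d(z,\pi z)>0$, hence $\cosh d(z,\pi z)>1$; dividing the two identities and using that $\cosh$ is strictly increasing on $[0,\infty)$ gives
\begin{align*}
 d(z,x)=d(z,y)\ \Longleftrightarrow\ \cosh d(\pi z,x)=\cosh d(\pi z,y)\ \Longleftrightarrow\ d(\pi z,x)=d(\pi z,y),
\end{align*}
as required.

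Combining the two cases, the bisector of $x$ and $y$ in $H$ equals $\{z\in H: d(\pi z,x)=d(\pi z,y)\}=\pi^{-1}(\{v\in L: d(v,x)=d(v,y)\})$, which is exactly the preimage under $\pi$ of the bisector of $x$ and $y$ in $L$. I do not expect a genuine obstacle: the substance is already contained in \cref{bise}. The only points that need a word are the symmetry of the $\F$-line in $x$ and $y$ (so that the proposition applies to both distances simultaneously), the convexity of $L$ (so that ``bisector in $L$'' is unambiguous and $\pi$ is well defined), and the separate — but elementary — treatment of the real hyperbolic case, which is not formally included in \cref{bise}.
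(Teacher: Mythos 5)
Your argument is correct and is essentially the paper's own: the paper deduces the corollary from the Pythagorean identity $\cosh d(y,z)=\cosh d(y,\pi z)\cosh d(\pi z,z)$ (Proposition \ref{hypla4}, valid for any $y\in L$), exactly as you do by applying the identity to both $x$ and $y$ and cancelling the common factor $\cosh d(z,\pi z)\ge 1>0$. Your extra remarks on the symmetry in $x,y$ and on the real case match the paper's conventions, so there is nothing to add.
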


Recall that totally geodesic submanifolds of $H=H_\F^k$ are of codimension at least $d$ so that bisectors are \emph{not} totally geodesic unless $\F=\R$.
This is one of the reasons why the geometry of $H$ is more difficult for $\F\in\{\C,\H,\O\}$.

\subsection{Structure of the article}\label{struc}
As we already mentioned above, much of the discussion is in the setting of Gromov hyperbolic spaces.
These are introduced in \cref{sechyp}.
In \cref{secmark}, we fix the terminology regarding Markov processes and add some details about Martin boundaries. 
In \cref{secls}, we describe a refinement of the Lyons-Sullivan discretization procedure, which yields FLS-restrictions.
In \cref{secancona}, we introduce Ancona processes and prove general versions of \cref{main1} and Corollaries \ref{main3} and \ref{main3f}, where the proof of finiteness of first moments and entropy is transferred to \cref{subm}.
In \cref{suppc}, we identify the Poisson boundary of a general class of Ancona processes.
Sections \ref{subrelhyp} and \ref{secsem} are devoted to \cref{bowbou} and an extended version of \cref{main5}, respectively.
The proof of \cref{main7} is in \cref{secharm}.
The short Appendices \ref{symm} and \ref{bisec} are devoted to  hyperbolic planes in symmetric spaces and bisectors in hyperbolic spaces $H_\F^k$.

\subsection{Conventions}\label{convent}
Throughout the paper, we assume that $(H,\rho)$ is a proper metric space.
Usually, we just write $H$ instead of $(H,\rho)$, assuming that $\rho$ is given.
For $x\in H$ and $r>0$, $B(x,r)$ denotes the open ball of radius $r$ about $x$ and $B_x$ the ball $B(x,1)$.

Whenever occurring, $\mathcal{L}$ denotes a Radon measure on $H$, which is positive on open subsets of $H$.
Furthermore, $\Gamma$ denotes a group which acts properly discontinuously on $H$ and leaves given structures on $H$, like $\rho$ or $\mathcal L$, invariant.

\section{Gromov hyperbolic spaces}\label{sechyp}
Let $(H,\rho)$ be a proper metric space.
A (unit speed) \emph{geodesic} in $H$ is a curve $c\colon I\to H$ such that $\rho(c(s),c(t))=|s-t|$ for all $s,t\in I$.
We say that $H$ is \emph{geodesic} if, for any two points $x,y\in H$, there is a geodesic segment joining them.
Albeit not unique, we usually write $[x,y]$ for any such geodesic.

A (geodesic) triangle in $H$ consists of three geodesic segments which meet at the vertices $x,y,z$.
For simplicity, we denote such a triangle by $(x,y,z)$, albeit the geodesic segments $[x,y],[y,z],[z,x]$ connecting them might not be unique.

\subsection{Gromov hyperbolicity}\label{subgrohyp}
Given three points $x,y,z\in H$ (where $H$ is not necessarily geodesic), we call
\begin{align}\label{grpr}
    (yz)_x = \frac12(\rho(x,y) + \rho(x,z) - \rho(y,z)) 
\end{align}
the \emph{Gromov product} of $y,z$ with respect to $x$.
The three numbers
\begin{align*}
	a=(yz)_x, \quad b=(zx)_y, \quad c=(xy)_z \ge 0
\end{align*}
satisfy
\begin{align*}
    a+b=\rho(x,y), \quad a+c = \rho(x,z), \quad b+c = \rho(y,z).
\end{align*}
Given $u\in H$ and $\delta\ge0$, we say that $H$ is \emph{$\delta$-hyperbolic with respect to $u$} if
\begin{align}\label{grohyp}
    (xy)_u \ge (xz)_u\wedge(yz)_u - \delta     
\end{align}
for any three points $x,y,z$ in $H$.
If this holds for some $u$ in $H$, then $H$ is $2\delta$-hyperbolic with respect to any $u\in H$.
We say that $H$ is \emph{$\delta$-hyperbolic} if $H$ is \emph{$\delta$-hyperbolic with respect to any $u\in H$}.
We say that $H$ is \emph{Gromov-hyperbolic} if $H$ is \emph{$\delta$-hyperbolic} for some $\delta\ge0$.
General references for Gromov-hyperbolic spaces are \cite{Gro,GH,BriHae}.

Basic examples are complete and simply connected Riemannian manifolds with negative curvature bounded away from zero.
More precisely, for some appropriate $\delta>0$, such a manifold $H$ is $\delta$-hyperbolic if its  curvature is bounded from above by $-1$.
Simplicial trees with edges of positive lengths are $0$-hyperbolic. 

For a triangle $(x,y,z)$ in $H$, there is an isometric inclusion $i_\Delta$ from the triangle (as the union of its given edges) to the metric tripod $T_\Delta=T(a,b,c)$ with its three edges of respective lengths $a,b,c$ (as above).
We say that $(x,y,z)$ is \emph{$\delta$-thin} if preimages under $i_\Delta$ have diameter at most $\delta$.
The diameter of the preimage of the central vertex of $T_\Delta$ is called the \emph{insize} of $\Delta$.

For any triangle $\Delta=(x,y,z)$, we have
\begin{align}\label{triest}
	(yz)_x \le \rho(x,[y,z]) \le (yz)_x+\insize(\Delta).
\end{align}
If $H$ is geodesic and $H$ is $\delta$-hyperbolic, then all triangles in $H$ are $4\delta$-thin and \eqref{triest} implies
\begin{align}
	(yz)_x \le \rho(x,[y,z]) \le (yz)_x+4\delta.
\end{align}
Conversely, if all triangles in $H$ are $\delta$-thin, then $H$ is $2\delta$-hyperbolic.

We say that a triangle $(x,y,z)$ in $H$ is \emph{$\delta$-slim} if any edge (of its given edges) is contained in the $\delta$-neighborhood of the remaining two edges.
Obviously, if $(x,y,z)$ is $\delta$-thin, then it is $\delta$-slim with insize at most $\delta$.
Conversely, if $H$ is geodesic and all triangles in $H$ are $\delta$-slim, then all triangles in $H$ are $4\delta$-thin and $H$ is $8\delta$-hyperbolic.
See Proposition 21 \cite[Chapter 2]{GH} for all of this.

If $H$ is Gromov hyperbolic, then it admits a compactification by a boundary at infinity, called the \emph{Gromov} or \emph{geometric boundary} of $H$; see \cite[Chapter 7]{GH}.
To that end, say that a sequence $(x_m)$ in $H$ \emph{converges at infinity} if 
\begin{align*}
	\lim_{m,n\to\infty}(x_mx_n)_u=\infty
\end{align*}
for some--or any--$u\in H$.
Say that two such sequences $(x_m)$ and $(y_n)$ in $H$ are \emph{equivalent} if
\begin{align*}
	\lim_{m,n\to\infty}(x_my_n)_u=\infty
\end{align*}
for some--or any--$u\in H$.
The set $\partial_{\geo}H$ of equivalence classes is called the \emph{geometric boundary} of $H$.
If $\xi\in\partial_{\geo}H$ and the sequence $(x_m)$ belongs to $\xi$, then we say that $(x_m)$ \emph{converges to} $\xi$ and write $\lim x_m=\xi$.

The Gromov product extends to $\bar H=H\cup\partial_{\geo}H$:
For $u\in H$ and $\xi,\eta\in\bar H$, set
\begin{align*}
	(\xi\eta)_u = \sup\liminf_{m,n\to\infty}(x_my_n)_u
\end{align*}
where the supremum is taken over all sequences $(x_m)$ and $(y_n)$ converging to $\xi$ and $\eta$, respectively.
If $H$ is $\delta$-hyperbolic, then
\begin{enumerate}[label=(P\arabic*)]
\item\label{grpri} $\xi=\eta$ iff $2(\xi\eta)_{u}=\rho(u,\xi)+\rho(u,\eta)$ ($=\infty$ if $\xi$ or $\eta$ in $\partial_{\geo}H$).
\item\label{grpre} For all sequences $(x_m)$ and $(y_n)$ converging to $\xi$ and $\eta$,
\[(\xi\eta)_{u}-2\delta\le\liminf(x_my_n)_{u}\le(\xi\eta)_{u}.\]
\end{enumerate}

\begin{lem}\label{R2c}
Let $b$ be a Busemann function centered at a point $\xi\in\partial_{\geo}H$.
Let $x,y\in H$ with $b(y)-b(x)\le R$ and $(y\xi)_x\le c$.
Then $\rho(x,y)\le R+2c$.
\end{lem}

\begin{proof}
Given $\ve>0$, there is a point $z\in H$ close to $\xi$ such that
\begin{align*}
    \rho(y,x)+\rho(z,x)-\rho(y,z) &= 2(yz)_x \le 2(y\xi)_x + \ve \le 2c + \ve \\
    \rho(z,y)-\rho(z,x) &\le b(y)-b(x) + \ve \le R + \ve.
\end{align*}
Taking the sum of the equations and letting $\ve\to0$ yields the claim.
\end{proof}

We now define neighborhoods of points in $\bar H$.

\begin{dfn}\label{deftas}
Suppose that $H$ is $\delta$-hyperbolic, and let $x\in H$, $\xi\in\bar H$ and $r>0$.
Then we call
\begin{align*}
    R(x,\xi,r) := \{y\in \bar H \mid (y\xi)_x > r \}
\end{align*}
the \emph{region of width $1/r$ about $\xi$ with base $x$}.
\end{dfn}

Regions respectively their intersections $R(x,\xi,r)\cap\partial_{\geo}H$ turn $\bar H$ and $\partial_{\geo}H$ into compact Hausdorff spaces.

\begin{cor}\label{triest2}
Let $u\in H$ and $\xi,\eta\in\partial_{\geo}H$ with $\xi\ne\eta$.
Then $(xy)_u\leq(\xi\eta)_u+2\delta$ for all $x\in R(u,\xi,(\xi\eta)_u+\delta)$ and $y\in R(u,\eta,(\xi\eta)_u+2\delta)$.
\end{cor}

\begin{proof}
Since $H$ is $\delta$-hyperbolic,
\begin{align*}
    (\xi\eta)_u &\ge (\xi x)_u\wedge(x\eta)_u - \delta
    = (x\eta)_u - \delta \\         
    &\ge (xy)_u\wedge(y\eta)_u - 2\delta
    = (xy)_u -2\delta,
\end{align*}
where we used that $(\xi x)_u>(\xi\eta)_u+\delta$ and $(y\eta)_u>(\xi\eta)_u+2\delta$
for all $x\in R(u,\xi,(\xi\eta)_u+\delta)$ and $y\in R(u,\eta,(\xi\eta)_u+2\delta)$.
\end{proof}

Suppose now that $H$ is geodesic and $\delta$-hyperbolic in the sense of thin triangles.
Then, as in the case of Hadamard manifolds, points of $\partial_{\geo}H$ may be viewed as asymptote classes of geodesic rays.
For any two different points $\xi,\eta\in\partial_{\geo}H$, there is a geodesic $[\xi,\eta]$ joining them.
(The parametrization of geodesics will be specified if needed.)
By approximation, one concludes that geodesic triangles with vertices in $\bar H=H\cup \partial_{\geo}H$ are also $\delta$-thin.

The following estimate of distances between asymptotic rays is useful.

\begin{lem}\label{asym}
Let $[x,\xi)$ and $[y,\xi)$ be asymptotic geodesic rays, and let $x_t\in[x,\xi)$ and $y_t\in[y,\xi)$ be the points of distance $t$ on them to $x$ and $y$, respectively.
Then $\rho(x_t,[y,\xi))\le2\delta$ and $\rho(x_t,y_{b(y)+t})\le4\delta$ for all $t\ge \rho(x,y)+\delta$, where $b$ is the Busemann function associated to $[x,\xi)$.
\end{lem}

\begin{proof}
Let $t\ge \rho(x,y)+\delta$ and $s>t$.
Then there is a point $z\in[y,x_s]$ such that $\rho(x_t,z)\le\delta$.
By the triangle inequality, \[ |\rho(x_s,z)-(s-t)| = |\rho(x_s,z)-\rho(x_s,x_t)| \le \delta.\]
Therefore \[ \rho(y,z) = \rho(y,x_s) - \rho(z,x_s) = \rho(y,x_s) - (s-t) \pm \delta. \]
For $s$ sufficiently large, $\rho(x_s,z)\ge \rho(x_s,[y,\xi)) + \delta$, and then there is a point $y'\in[y,\xi)$ such that $\rho(y' ,z)\le\delta$.
This implies the first asserted inequality.
Furthermore, by the triangle inequality, $|\rho(y,y')-\rho(y,z)|\le\delta$.
Hence \[ \rho(y,y') =  \rho(y,x_s) - s + t \pm 2\delta.\]
Now, for any $\ve>0$, $\rho(y,x_s) - s = b(y) \pm \ve$ for all sufficiently large $s$.
Then $\rho(y_{b(y)+t},y')=\pm 2\delta\pm\ve$.
Letting $s$ tend to infinity, the second asserted inequality follows.
\end{proof}

\cref{4d2c} is a sophisticated version of \cref{asym} in the context of general Gromov hyperbolic spaces.

\subsection{Limit sets}\label{subseclim}
Let $H$ be a Gromov hyperbolic space.
For a subset $X\subseteq H$, let $\Lambda_X$ be the set of limit points of $X$ in $\partial_{\geo}H$.
If $\Gamma$ is a group of isometries of $H$, then $\Lambda_{\Gamma x}$ does not depend on the choice of $x$, and we write $\Lambda_\Gamma$ for $\Lambda_{\Gamma x}$ and call $\Lambda_\Gamma$ the \emph{limit set} of $\Gamma$.

Say that a diverging sequence $(x_m)$ in $X$ \emph{converges radially to} $\xi\in\partial_{\geo}H$ if $\limsup_{m\to\infty}(u\xi)_{x_m}<\infty$ for some (or any) $u\in H$.
Say that $\xi\in\partial_{\geo}H$ is a \emph{radial limit point of $X$} if there is a sequence in $X$ converging radially to $\xi$.

Denote by $\Lambda_X^{\rad}$ the set of radial limit points of $X$ in $\partial_{\geo}H$.
If $\Gamma$ is a group of isometries of $H$, then $\Lambda_{\Gamma x}^{\rad}$ does not depend on the choice of $x$, and we write $\Lambda_\Gamma^{\rad}$ for $\Lambda_{\Gamma x}^{\rad}$ and call $\Lambda_\Gamma^{\rad}$ the \emph{radial limit set} of $\Gamma$.

\begin{exa}\label{exaradial}
If there is a unit speed geodesic ray $c$ from $u$ to $\xi\in\partial_{\geo}H$, then $(u\xi)_{c(t)}=0$.
In particular, if $H$ is geodesic, then $\Lambda_H^{\rad}=\partial_{\geo}H$.
\end{exa}

\section{Markov processes}\label{secmark}

For $H$ and $\mathcal{L}$ as usual, choose an origin $o\in H$.
Let \[\mathcal{Z}=((Z_t)_{t\in \K},(\mathbb{P}_x)_{x\in H}),\] for $\K$ equal to $\N_0=\N\cup\{0\}$ or $[0,\infty)$, be a Markov process in $H$, where $H$ is assumed to be discrete if $K=\N_0$ and path connected if $\K=[0,\infty)$.
In the case $\K=\N_0$, we also speak of \emph{Markov chains}, in the case $\K=[0,\infty)$ of \emph{continuous Markov processes}.

\begin{enumerate}[label=(MP)]
    \item\label{conmar} Unless specified otherwise, we assume that Markov processes are \emph{irreducible}, \emph{time-homogeneous} and \emph{strong}.
    We also assume that sample paths of continuous Markov processes are $\p_x$-\emph{almost surely continuous} for any $x\in H$ and that they are \emph{absolutely continuous} in the sense defined below.
\end{enumerate}
For us, important examples of such processes will be time-homogeneous Markov chains on countable discrete sets and  diffusion processes \[dZ_t=b(Z_t)\,dt+\sigma\,dB_t\] on complete and connected Riemannian manifolds, where $(B_t)_{t\geq 0}$ is Brownian motion, $\sigma$ a diffusion coefficient, and $b$ a drift field.

For any non-empty open subset $V\subseteq H$, the \emph{Green kernel $G_V$ relative to $V$} is the family of Borel measures on $H$ defined by
\begin{align*}
    G_V(x,A):=\int_{\K} \p_x[Z_t\in A, t\leq  \tau_{V^c}]\,dt
\end{align*}
for points $x\in V$ and Borel subsets $A\subseteq H$,
where $\tau_{V^c}$ is the first hitting time for $V^c$ and $dt$ denotes the counting or Lebesgue measure, depending on $\K$.
We drop the subscript $V$ when $V=H$.
Clearly, $G_V\le G$ for any non-empty open $V\subseteq H$.

We say that $\mathcal Z$ is \emph{recurrent} if and only if, for any $x\in H$ and Borel set $A\subseteq H$, the probability, that a random path from $x$ meets $A$, is one.
If $\mathcal Z$ is not recurrent, then we say that it is \emph{transient}.
Equivalently, $\mathcal Z$ is transient if $G(x,.)$ is a Radon measure for one (or any) $x\in H$.

We say that a continuous Markov process $\mathcal{Z}$ on $H$ is \emph{absolutely continuous} if, for any $x_0\in H$, $0<2r<\diam H$, and $x\in B(x_0,r)=V$, $\tau_{V^c}$ is $\p_x$-almost surely finite and $G_V(x,.)$, is a finite Radon measure on $V$, which is absolutely continuous with respect to $\mathcal L$ on $V$ with density $G_V=G_V(x,y)$ on $V^2$, which is  continuous outside the diagonal.
In the transient case, we also require that $G$ satisfies the corresponding properties.

\subsection{Martin boundary and hitting measures}\label{secmart}
Assume that $\mathcal{Z}$ is a transient Markov process on $H$ (as in \ref{conmar}).
In the continuous case, assume also that the associated Martin kernels $K(.,y)=G(.,y)/G(o,y)$ are locally equicontinuous on $H$ in the sense that, for any compact subset $C\subseteq H$, the set of $K(.,y)$ with $\rho(y,C)\ge1$ is equicontinuous on $C$.

Consider the space of all divergent sequences $(y_n)$ in $H$.
By the above, the associated Martin kernels $K(.,y_n)=G(.,y_n)/G(o,y_n)$ are locally equicontinuous on $H$.
Consider diverging sequences $(y_n)$ such that the associated Martin kernels converge to a limit function, and define two such sequences to be equivalent if their limit functions coincide.
The set $\partial_{\mathcal{Z}}H$ of equivalence classes is called the \emph{Martin boundary} of $(H,\mathcal{Z})$.   
Identifying points $x\in H$ with the corresponding ($\mathcal{Z}$-)super-harmonic functions $K_x$ embeds $H$ into the space $\hat{\mathcal S}(H,\mathcal Z)$ of $\mathcal Z$-super-harmonic functions on $H$.
With respect to this identification, uniform convergence on compact subsets turns $H\cup\partial_{\mathcal{Z}}H$ and $\partial_{\mathcal{Z}}H$ into compact Hausdorff spaces.
For convenience, we sometimes distinguish points $\xi$ in $\partial_{\mathcal{Z}}H$ from the super-harmonic functions $K_\xi=K(.,\xi)$ they represent.

We have the \emph{Riesz decomposition} of positive super-harmonic function as the unique sum of a non-negative potential and a non-negative ($\mathcal{Z}$-)harmonic function; see \cite{KunWata}, \cite{KunWatp}.
The set $\partial_{\mathcal{Z}}^{\min}H$ of extremal positive harmonic functions, called the \emph{minimal Martin boundary}, is a Borel subset of $\partial_{\mathcal{Z}}H$.
For any positive harmonic function $u$, we have a \emph{Martin representation formula}, that is, there is a unique finite Radon measure $\mu_u$ on $\partial_{\mathcal{Z}}^{\min}H$ such that
\begin{align*}
	u(x)=\int K(x,\xi)\,d\mu_u(\xi);
\end{align*}
see \cite[Theorem 4]{KunWatp}.
Moreover one has a \emph{Martin convergence theorem}, that is, there is a random variable $Z_\infty$ taking values in $\partial_{\mathcal{Z}}^{\min}H$ such that, for each $x\in H$, $Z_t$ converges $\p_x$-almost surely to $Z_\infty$;
a proof may be obtained as a slight variation of the argument in \cite[Theorem 5.1]{Saw}.
The probability measure $(Z_\infty)_\ast \p_x$ in $\partial_{\mathcal{Z}}^{\min}H$ is the \emph{hitting} or \emph{exit measure relative to $x$}, denoted $\nu_x$.
The Radon-Nikodym derivative of the hitting measures is
\begin{align*}
	\frac{d\nu_x}{d\nu_y}(\xi)=\frac{K(x,\xi)}{K(y,\xi)},
\end{align*}
for all $x,y\in H$ and $\xi\in\partial_{\mathcal{Z}}^{\min}H$ (see \cite[Eq. 12.4, Proposition 12.11]{KunWatp}). Consequently, 
\begin{align*}
	\nu_x(E)=\int_E K(x,\xi) \,d\nu_{o}(\xi),
\end{align*}
for any $x\in H$ and Borel subset $E\subseteq\partial_{\mathcal{Z}}^{\min}H$. 

\begin{exa}\label{SullMart}
Let $H$ be a Riemannian symmetric space with negative  curvature,
that is, $H$ is a hyperbolic space, $H=H_\F^k$ with $\F\in\{\R,\C,\H,\O\}$, endowed with its standard metric.
For a quotient $M=\Gamma\backslash H$ with $\Gamma$ of divergence type and of critical exponent $\delta=\delta(\Gamma)\ge h/2$,
let $\vf$ be the lift of the positive $\lambda_0$-harmonic function on $M$ to $H$ with $\vf(o)=1$, where $\lambda_0=\delta(h-\delta)$.
Then the Martin boundary of Sullivan's $\vf$-process on $H$ coincides with the limit set $\Lambda_\Gamma\subseteq\partial_{\geo}H$ at infinity.
Using that $\vf(o)=1$, the Martin kernels are given by
\begin{align*}
    K(x,\xi) = e^{-\delta b_\xi(x)}/\vf(x),
\end{align*}
where $\xi\in\Lambda_\Gamma$ and $b_\xi$ is the Busemann function associated to $\xi$ such that $b_\xi(o)=0$ for the chosen origin $o\in H$.
\end{exa}

\section{Lyons-Sullivan discretization}\label{secls}

With $(H,\rho,\mathcal{L})$ as usual, we assume that $H$ is path connected and let $\mathcal Z=(Z_t)_{t\ge0}$ be a continuous Markov process on $H$ (satisfying \ref{conmar}), invariant under a group $\Gamma$ of isometries acting properly discontinuously on $H$.
We say that $\mathcal{Z}$ is \emph{regular} if, for any open ball $V\subseteq H$ of radius less than $\diam H/2\le\infty$
\begin{enumerate}[label=(R\arabic*)]
\item\label{r1}
and compact subset $F\subseteq V$, there is a constant $C>1$ such that
\begin{align*}
    d\varepsilon_x/d\varepsilon_y = C^{\pm 1}
\end{align*}
for all $x,y\in F$, where $\ve_z=(Z_{\tau_{\partial V}})_{\ast}\p_z$;
\item\label{r2}
and $x\ne y$ in $V$, $G_V(x,y)\to\infty$ as $y\to x$.
\end{enumerate}
These properties are satisfied for Brownian motion and, more generally, for Sullivan's process on a (complete and connected) Riemannian manifold.

\subsection{Discretization}\label{ssecls}
Let $\mathcal{Z}=((Z_t)_{t\ge0},(\p_x)_{x\in H})$ be a Markov process in $H$ (in accordance with \ref{conmar}),
and $X$ a discrete subset of $H$.
Let $(F_x)_{x\in X}$ and $(V_x)_{x\in X}$ be families of compact respectively open and relatively compact subsets of $H$.
We say that they are (regular Lyons-Sullivan) \emph{$LS$-data for $X$ (with respect to $\mathcal{Z}$)} if
\begin{enumerate}[label=(D\arabic*)]
\item\label{d1}
$x\in\mathring{F}_x$ and $F_x\subseteq V_x$ for all $x\in X$;
\item\label{d2}
$F_x\cap V_y=\emptyset$ for all $x\ne y$ in $X$;
\item\label{d3}
$F=\cup_{x\in X}F_x$ is closed and recurrent with respect to $\mathcal Z$;
\item\label{d4}
for all $x\in X$ and $y\in F_x$, the exit measures from $V_x$ satisfy
\begin{equation*}
  d\ve_x/d\ve_y = C^{\pm1}
\end{equation*}
for some constant $C>1$, which does not depend on $x$ and $y$.
\end{enumerate}
Lyons and Sullivan associate to such data a family $\mu=(\mu_y)_{y\in H}$ of probability measures on $X$ such that restriction from $H$ to $X$ defines an isomorphism between the spaces of bounded $\mathcal{Z}$-harmonic functions and bounded $\mu$-harmonic functions on $X$.
In the situation they call cocompact, they obtain also an isomorphism of the corresponding cones of positive harmonic functions.
They actually discuss all this in the case where $H$ is a Riemannian manifold and $\mathcal{Z}$ is Brownian motion.
However, their arguments also hold in greater generality \cite{BP}. 

We will need some information on the construction of $\mu$.
To that end, suppose that we are given LS-data as above, and let $\Omega$ be the space of sample paths of $\mathcal{Z}$.
For $\omega\in\Omega$, set
\begin{equation}\label{s0}
  S_0(\omega)= 
  \begin{cases}
  0 &\text{if $\omega(0)\notin X$}, \\
  \inf \{ t>0 \mid \omega(t)\notin V_x\} &\text{if $\omega(0)=x\in X$,}
\end{cases}
\end{equation}
and recursively, for $n\ge1$,
\begin{equation}\label{rs}
\begin{split}
  R_n(\omega) &= \inf \{ t\ge S_{n-1}(\omega) \mid \omega(t)\in F \}, \\
  S_n(\omega) &= \inf \{ t\ge R_n(\omega) \mid \omega(t)\notin V_{X_n(\omega)} \},
\end{split}
\end{equation}
where $X_n=X_n(\omega)\in X$ with $Y_n=Y_n(\omega)=\omega(R_n(\omega))\in F_{X_n(\omega)}$.

On the product $\tilde\Omega=\Omega\times[0,1]^\N$, let
\begin{equation}\label{na}
	N(\omega,\alpha)
	= \inf \{ n>0 \mid \alpha_n < \kappa(X_n(\omega),Y_n(\omega),Z_n(\omega)) \}
\end{equation}
where $\alpha=(\alpha_1,\alpha_2,\dots)\in[0,1]^\N$, $Z_n=Z_n(\omega)=\omega(S_n(\omega))$, and
\begin{align}\label{nk}
	\kappa = \kappa(x,y,z)
	= \frac1C\frac{d\ve(x,V_x)}{d\ve(y,V_x)}(z)
\end{align}
for $x\in X$, $y\in F_x$ and $z\in\partial V_x$. With $\hat{\p}_y=\p_y\otimes \lambda^{\otimes\N}$, where $\lambda$ is the Lebesgue measure in $[0,1]$, we have
\begin{align}\label{muy}
    \mu_y(x)=\hat{\p}_y[Z_\tau\in F_x],
\end{align}
for all $y\in H$ and $x\in X$, where the \textit{stopping time $\tau$ associated to $\mu$} is
\begin{align}\label{tau}
	\tau(\omega,\alpha)=S_{N(\omega,\alpha)}(\omega);
\end{align}
see \cite[Chapter 8]{LS}.

\begin{rem}\label{s1}
If $X$ and $\mathcal{Z}$ are invariant under $\Gamma$ and the quotient process of $\mathcal Z$ in $\Gamma\backslash H$ has an invariant probability, then, by Kac's return time lemma \cite{Ka}, applied to the quotient of the Markov chain $(Z_i)_{i=1}^\infty$, there is a constant $T>0$ such that $E_y[S_1] < T$ for all $y\in \cup_{x\in X}\partial V_x$ ; compare with the proof of \cite[Theorem 3.2(b)]{BL2}, where the case of Brownian motion is considered.
\end{rem}

\begin{rem}\label{poisson}
If $\mu$ is the LS-discretization of a regular process $\mathcal Z$, then the Poisson boundaries and the spaces of bounded harmonic functions of $\mathcal Z$ and $\mu$ are isomorphic. If $\mu$ is supported in a $\Gamma$-invariant discrete set and $\mathcal Z$ is $\Gamma$-invariant, then the isomorphism is $\Gamma$-equivariant. For proofs, note that the arguments of \cite{Kai}, \cite{BL} adapt directly to the more general setting.
\end{rem}

We say that the $LS$-data are \emph{balanced} if there is a constant $B>0$ such that the Green kernels of the $V_x$ satisfy
\begin{enumerate}[label=(D5)]
\item\label{balan}
$G_{V_x}(z,x) = B$ for all $x\in X$ and $z\in\partial F_x$.
\end{enumerate}
In the case of Brownian motion on Riemannian manifolds, the notion of balanced was introduced in \cite{BL2}; see also \cite[(D5) in Section 3]{BP}.
It may be checked by following the proofs in \cite{LS} and \cite{BL2} verbatim that the results on the existence of (balanced) LS-data may be extended to processes which are regular in the sense defined in the beginning of this section.

Let $\Gamma$ be a discrete subgroup of the isometry group of $H$. We say that $\mathcal Z$ is \textit{recurrent modulo $\Gamma$}, if $\mathcal Z$ is $\Gamma$-invariant, and almost every $\mathcal Z$-path modulo $\Gamma$ visits any open set in $\Gamma\backslash H$ infinitely often. 

\begin{prop}\label{propad}
For a properly discontinuous group $\Gamma$ of automorphisms of $(H,\mathcal{Z})$,
an orbit $X\subseteq H$ admits $\Gamma$-invariant balanced $LS$-data if $\mathcal{Z}$ is regular and recurrent modulo $\Gamma$.
\end{prop}

\begin{proof}
Let $X=\Gamma o$ and $\Gamma_{o}$ be the stabilizer of $o$.
Let $V_{o}$ be an open ball centered at $o$ (invariant under $\Gamma_{o}$). Define $F_{o}:=\{z\in V_{o}\mid G_{V_{o}}(z,o) \geq t\}$, for $t>0$ large enough that $F_{o}$ is compactly contained in $V_{o}$, and such that $gF_{o}\cap V_{o}=\emptyset$ unless $g\in\Gamma_{o}$. 
For $g\in\Gamma$ and $x=go$, let $F_{x}=gF_{o}$ and $V_{x}=gV_{o}$.
Since $\mathcal{Z}$ is recurrent modulo $\Gamma$, $F=\cup_{x\in X}F_x$ is $\mathcal{Z}$-recurrent.
Moreover, $(F,V)=(F_x,V_x)_{x\in X}$ satisfies \ref{d4} by regularity of $\mathcal Z$. 
By the choice of $F_o$, \ref{balan} holds for $B = t$.
In conclusion, $(F,V)$ are $\Gamma$-invariant balanced LS-data.
\end{proof}

\begin{thm}\label{Green}
If $(F_x,V_x)$ are balanced $LS$-data for $X$ with constants $B$ and $C$ and associated family $\mu$ of probability measures on $X$, then
\begin{align*}
	G(y,x) = BCg(y,x)
	\quad\text{for all $x\in X$ and $y\in H\setminus V_x$},
\end{align*}
where $g$ denotes the Green kernel of the $\mu$-random walk on $X$.
In particular, 
\begin{enumerate}
    \item $\mathcal{Z}$ is transient if and only if the $\mu$-random walk is transient;
    \item the Martin kernels $K$ of $\mathcal{Z}$ and $k$ of $\mu$ are related by
    \begin{align*}
        k(y,x)=K(y,x)
    \end{align*}
    for all $y\in H\setminus (V_x \cup V_o)$ and $x\ne o$, where we choose $o\in X$.
\end{enumerate}
Moreover, if $g$ is symmetric, then $\mu_{x}(y) = \mu_{y}(x)$ for all $x,y \in X$.
\end{thm}

For the proof of \cref{Green}, compare \cite[p.\,86-88]{BL2}.
It establishes, in particular, that \emph{$\mu$ is an FLS-restriction} in the sense of the introduction.

\subsection{On the support of hitting measures of LS-discretizations}\label{subsupp}
Let $(H,\rho,\mathcal L)$ be as in the beginning of \cref{secls}.
Let $\mathcal{Z}=(Z_t)_{t\ge0}$ be a transient continuous Markov process in $H$ and $\nu=(\nu_x)_{x\in H}$ be the family of hitting measures in $\partial_{\mathcal Z}H$ associated to $\mathcal Z$.
Recall that the $\nu_x$ are pairwise equivalent with Radon-Nikodym derivatives $d\nu_x/d\nu_y=K(x,.)/K(y,.)$.
Hence the support of the $\nu_x$ does not depend on $x$; we denote it by $\supp\nu$.

\begin{prop}\label{supp}
Let $X\subseteq H$ be a discrete subset, which admits LS-data $(F,V)$ relative to a Markov process $\mathcal Z$.
Then $\supp \nu\subseteq \Lambda_F^{\mathcal Z}$, the set of limit points of $F$ in $\partial_{\mathcal Z}H$.
\end{prop}

\begin{proof}
If $\supp\nu$ would not be contained in $\Lambda_F^{\mathcal Z}$, then there would be a neighborhood $U\subseteq H\cup\partial_{\mathcal Z}H$ of a point $\xi\in \supp\nu \cap \partial_{\mathcal Z}H$ such that $\nu_o(U\cap \partial_{\mathcal Z}H)>0$ and $F\cap U=\emptyset$.
But since $\nu_o(U\cap \partial_{\mathcal Z}H)>0$, this would imply that, for a set of sample paths of positive measure, the final exit time from $F$ would be finite.
But then $F$ would not be recurrent with respect to $\mathcal{Z}$, a contradiction. 
\end{proof}

\section{Ancona processes}\label{secancona}
For $(H,\rho,\mathcal L)$ as usual, let $\mathcal{Z}=((Z_t)_{t\in \K},(\mathbb{P}_x)_{x\in H})$ be a Markov process in $H$ (in the sense of \ref{conmar}), $\Gamma$ a group acting properly discontinuously by automorphisms on $(H,\rho,\mathcal L,\mathcal{Z})$, and $o\in H$ an origin.
Assume that $H$ is $\delta$-hyperbolic and $\mathcal Z$ is transient with Green kernel $G$ and Martin kernel $K$ with respect to $o$. 

\begin{dfn}\label{Proc}
We say that $\mathcal{Z}$ is an \emph{Ancona process} if 
\begin{enumerate}[leftmargin=*, label=(A\arabic*)]
\item\label{decay}
there are $\alpha>0$ and $C_1\ge 1$ such that for all $x,y,z\in H$ satisfying $\rho(x,y) \geq 2$, we have
\begin{equation*}
    G(x,B_y)G(y,B_x)\le C_1 e^{-\alpha \rho(x,y)}
    \;\;\;\text{and}\;\;\; G(z,B_z) = C_1^{\pm1};
\end{equation*}
\item\label{harnackin}
(\emph{Harnack inequality})
for any $r,\ve>0$, there is $C_2=C_2(r,\ve)>0$ such that, for all $x,y,z\in H$ and $u,v,w\in H$ with 
\[\min\{\rho(x,y),\rho(x,z)\}>\ve\;\;\;\text{and}\;\;\; \max\{\rho(y,z),\rho(v,w)\}<r,\] 
\[\text{we have}\;\;\; G(x,y)\leq C_2 G(x,z)\;\;\;\text{and}\;\;\; G(v,B_u)\leq C_2 G(w,B_u);\]
\item\label{ancin}
(\emph{Ancona inequality})
for any $r\geq 0$, there is $C_3=C_3(r)>0$ such that, for all $x,y,z$ in $H$ with $\rho(x,y)\geq 2$ and $(xy)_z\le r$, we have 
\begin{equation*}
    G(x,B_y)\leq  C_3 G(x,B_z) G(z,B_y);
\end{equation*}
\item\label{geomar}there is a $\Gamma$-equivariant homeomorphism $\partial_{\geo}H\to\partial_{\mathcal Z}H$ which extends the identity of $H$ continuously.
\end{enumerate}
\end{dfn}

\begin{rem}
The above properties (as written, as well as with the balls of radius one above replaced by their centers in the first part of \ref{decay} and in \ref{ancin}) are satisfied by a large class of diffusion processes (see \cite{An}) and random walks (see \cite{An1, BHM, Gou}).
In particular, if $\mathcal L(B_y)=a^{\pm 1}$, for some $a>0$ and any $y\in H$, then by \ref{harnackin}, for any $x,y\in H$ with $\rho(x,y)\geq 2$, $G(x,B_y)=b^{\pm 1}G(x,y)$, for a constant $b>0$.  An application where the properties above are not however satisfied with the balls replaced by their respective centers appears in \cref{subrelhyp}.
\end{rem}

\begin{thm}\label{suppc}
For an Ancona process $\mathcal Z$ as above, assume in addition that $\mathcal{Z}$ is regular and recurrent modulo $\Gamma$, $H$ is geodesic, and $\Gamma$ does not have any fixed points in $\partial_{\geo}H$.
Then, under the identification $\partial_{\geo}H=\partial_{\mathcal Z}H$, the hitting measures $\nu=(\nu_x)_{x\in H}$ of $\mathcal{Z}$ on $\partial_{\geo}H$ have support $\supp\nu=\Lambda_\Gamma$.
\end{thm}

\begin{proof}
Since $\mathcal{Z}$ is recurrent modulo $\Gamma$, orbits of $\Gamma$ admit $\Gamma$-invariant LS-data $(F,V)$, by \cref{propad}.
Hence under the identification $\partial_{\geo}H=\partial_{\mathcal Z}H$, we have $\supp\nu\subseteq\Lambda_F$ by \cref{supp}.
Now by $\Gamma$-invariance, the diameters $\diam F_x$ do not depend on $x$.
Hence $\Lambda_\Gamma=\Lambda_F$ by hyperbolicity and, therefore, $\supp\nu\subseteq\Lambda_\Gamma$.
Since $\Gamma$ does not have fixed points in $\partial_{\geo}H$, the action of $\Gamma$ on $\Lambda_\Gamma$ is minimal; see Corollaire 26 in \cite[Chapter 8]{GH}.
Since $\supp\nu$ is non-empty, closed, and $\Gamma$-invariant, we get $\supp\nu=\Lambda_\Gamma$.
As for the claim about the Poisson boundary, see \cref{poisson}.
\end{proof}

If $H$ is a Hadamard manifold with pinched negative  curvature, then Brownian motion $\mathcal B$ is an Ancona process on $H$.
More generally, we have the following important class of examples.

\begin{thm}\label{sullivan}
Suppose that $M=\Gamma\backslash H$ is $\lambda_0$-recurrent, where $H$ is a Hada\-mard manifold with pinched negative  curvature, and that one of the following two conditions holds:
\begin{enumerate}
\item\label{less0} $\lambda_0=\lambda_0(M)<\lambda_0(H)$;
\item\label{less2} $\lambda_0=\lambda_0(M)=\lambda_0(H)$ and $H$ admits a compact quotient.
\end{enumerate}
Then Sullivan's $\vf$-process $\mathcal S$ is a $\Gamma$-invariant Ancona process on $H$.
\end{thm}

\begin{proof}
We start with proving \eqref{less0}.
Since $\lambda_0<\lambda_0(H)$, the operator $-\Delta+\lambda_0$ is coercive on $H$ in the sense of Ancona \cite[Section 1]{An}.
Hence, for this operator, \ref{decay} follows from Propositions 7 and 10 of \cite{An},
\ref{harnackin} is just the standard Harnack inequality for elliptic operators,
\ref{ancin} is a form of Theorem 1 of \cite{An}--the Harnack inequality at infinity--,
and \ref{geomar} is part of Theorem 3 of \cite{An}.

To obtain the corresponding properties for Sullivan's $\vf$-process $\mathcal S$, we note that the Green kernel of $\mathcal S$ is given by
\begin{align}
    G_\varphi(x,y) = G_{\lambda_0}(x,y)\frac{\vf(y)}{\vf(x)},
\end{align}
where $G_{\lambda_0}$ denotes the Green kernel of $-\Delta+\lambda_0$.
Now the standard Harnack inequality for elliptic operators in \ref{harnackin} remains true anyway and the factor $\vf(y)/\vf(x)$ just cancels out in the right way in the remaining properties.
Here we remark that the Martin kernel of $\mathcal S$ is
\begin{align}
    K(x,y) = \frac{G_{\lambda_0}(x,y)}{G_{\lambda_0}(o,y)}\frac{\vf(o)}{\vf(x)} = K_{\lambda_0}(x,y)\frac{\vf(o)}{\vf(x)}
\end{align}
so that convergence of a sequence of $K(x,y_n)$ is equivalent to convergence of the corresponding sequence of $K_{\lambda_0}(x,y_n)$.

Concerning \eqref{less2}, the arguments for the first part of the above proof follow from \cite{LL}:
\ref{decay} corresponds to \cite[Theorem 1.3]{LL}, \ref{ancin} to \cite[Theorem 3.2]{LL}, and \ref{geomar} to \cite[Theorem 1.4]{LL}.
The second part of the proof is exactly the same as above.
\end{proof}

Our first aim in this section is now \cref{dense}, for which we need some preparations.

For an Ancona process, by \eqref{harnackin}, there is $C_0>0$ such that, for any $x,y,z\in H$ with $B_x\cap B_z=\emptyset$ and $B_{y}\cap B_z=\emptyset$,
\begin{equation}\label{Mapprox}
    \frac{G(x,z)}{G(y,z)}= C_0^{\pm 1}\cdot\frac{G(x,B_z)}{G(y,B_z)}
\end{equation}
so that, for any sequence $z_n\xrightarrow{n\to\infty}\xi\in\partial_{\mathcal{Z}}H$,
\begin{align*}
  C_0^{-1}\limsup_{n\to\infty} \frac{G(x,B_{z_n})}{G(o,B_{z_n})}\leq K(x,\xi)\leq C_0\liminf_{n\to\infty} \frac{G(x,B_{z_n})}{G(o,B_{z_n})}.
\end{align*}

The following general consequence of the Harnack inequality of $\Gamma$-Ancona processes is an inverse of \ref{ancin}.

\begin{lem}\label{Gtriangle}
Suppose $\mathcal{Y}$ has the property \ref{conmar} in the metric space $(H,\rho)$, satisfies the second inequality of \ref{harnackin}, and $G(x,B_x)=C_1^{\pm 1}$ for some $C_1\geq 1$ and all $x\in H$.
Then there is $C'>0$ such that, for any $x,y,z\in H$,
\begin{equation*}
G(x,B_z)\geq  C'\cdot G(x,B_y)\cdot G(y,B_z).
\end{equation*}
\end{lem}

\begin{proof}
For any $w\in H$, let $\tau_w$ be the first hitting time for $B_w$.
Write
\begin{align}\label{F}
\begin{split}
    F(x,B_y) :&= \p_x[Y_t\in B_y \;\text{for some}\; t\geq 0] \\
    &= \p_x[\tau_y<\infty] = \int_{H} d[Y_{\tau_y*}\p_x](y').   
\end{split}
\end{align}
Then we have, with an appropriate constant $C>1$,
\begin{align}\label{Greeneq}
\begin{split}
    G(x,B_y) &= \int_0^\infty \p_x[Y_t\in B_y]\,dt
    =\int_{H} G(y',B_y)d[Y_{\tau_y*}\p_x](y') \\
    &\overset{(a)}{=} C^{\pm 1}\cdot F(x,B_y)\cdot G(y,B_y) = (CC_1)^{\pm 1} F(x,B_y)
\end{split}    
\end{align}
and
\begin{align}\label{triangle}
\begin{split}
F(x,B_z) &\geq \p_x[Y_t\in B_z\;\text{for some}\; t\geq \tau_y] \\
&= \int_{H} F(y',B_z)\, d[(Y_{\tau_y})_\ast\p_x](y') \\
&\overset{(b)}{=}C^{\pm 1}\cdot  F(x,B_y)\cdot F(y,B_z),
\end{split}
\end{align}
where (a) follows from the second inequality in \ref{harnackin} and (b) from (a) and the second inequality in \ref{harnackin}. So \eqref{Gtriangle} is obtained from \eqref{Greeneq} and \eqref{triangle}.
\end{proof}

\begin{lem}\label{lemnotan}
For $x_0\in H$ and $\xi\in\partial_{\geo}H$,  $(ox)_{x_0}\le (o\xi)_{x_0}+\delta$ for all $x\in R(x_0,\xi,(o\xi)_{x_0}+\delta)$.
\end{lem}

\begin{proof}
Since $H$ is $\delta$-hyperbolic, we have $(o\xi)_{x_0} \ge (ox)_{x_0}\wedge(x\xi)_{x_0} - \delta$ for all $x\in H$.
By definition, $(x\xi)_{x_0}>(o\xi)_{x_0}+\delta$ for $x\in R(x_0,\xi,(o\xi)_{x_0}+\delta)$.
This implies that $(ox)_{x_0}\wedge(x\xi)_{x_0}=(ox)_{x_0}$.
\end{proof}

From now on, we assume that $\mathcal Z$ is an Ancona process.

\begin{lem}\label{lemdense}
For $x_0\in H$ with $\rho(o,x_0)\geq 2$ and  $\xi\ne\eta$ in $\partial_{\geo}H$,
\begin{align*}
    K(x,\eta)/K(x,\xi) \le C e^{-\alpha\rho(o,x_0)},
\end{align*}
for all $x\in R(o,\xi,(\xi\eta)_o+\delta)\cap R(x_0,\xi,(o\xi)_{x_0}+\delta)$,
where $C$ is given in \eqref{Ceq}.
\end{lem}

\begin{proof}
Let $x\in U = R(o,\xi,(\xi\eta)_o+\delta)\cap R(x_0,\xi,(o\xi)_{x_0}+\delta)$ and $y\in V=R(o,\eta,(\xi\eta)_o+2\delta)$.
Then we get $(xy)_o\leq(\xi\eta)_o+2\delta$ for all $x\in U$ and $y\in V$, by \cref{triest2}.
In particular, \ref{ancin} applies to triples $x\in U$, $y\in V$ and $o$ with $C_3=C_3((\xi\eta)_o+2\delta)$.

Let $x\in U$ and $(y_k)$ be a sequence in $V$ converging to $\eta$.
Then  $K(x,y_k)\to K(x,\eta)$ as $k\to\infty$, by \ref{geomar}.
From \eqref{Mapprox} and \ref{ancin}, we obtain
\begin{align*}
    K(x,y_k) &= \frac{G(x,y_k)}{G(o,y_k)} \le C_0 \frac{G(x,B_{y_k})}{G(o,B_{y_k})} \\
    &\le C_0C_3 \frac{G(x,B_{o})G(o,B_{y_k})}{G(o,B_{y_k})} = C_0C_3G(x,B_{o}).
\end{align*}
Let now $(z_l)$ be a sequence in $U$ converging to $\xi$.
Then $K(x,z_l)\to K(x,\xi)$ as $l\to\infty$ and
\begin{align*}
    K(x,z_l) &= \frac{G(x,z_l)}{G(o,z_l)}
    \ge \frac1{C_0}\frac{G(x,B_{z_l})}{G(o,B_{z_l})} \\
    &\ge \frac{C'}{C_0C_4}\frac{G(x,B_{x_0})G(x_0,B_{z_l})}{G(o,B_{x_0})G(x_0,B_{z_l})}
    = \frac{C'}{C_0C_4}\frac{G(x,B_{x_0})}{G(o,B_{x_0})},
\end{align*}
by \eqref{Mapprox} and \ref{ancin} with $C_4=C_3((o\xi)_{x_0}+\delta)$, the latter by \cref{lemnotan}.
Hence
\begin{align}
    \frac{K(x,y_k)}{K(x,z_l)}
    &\le \frac{C_0^2C_3C_4}{C'}\frac{G(x,B_{o})G(o,B_{x_0})}{G(x,B_{x_0})} \notag \\
    &\le \frac{C_0^2C_3C_4^2}{C'}\frac{G(x,B_{x_0})G(x_0,B_o)G(o,B_{x_0})}{G(x,B_{x_0})} \notag \\
    &= \frac{C_0^2C_3C_4^2}{C'}G(x_0,B_o)G(o,B_{x_0}) \notag \\
    &\le  \frac{C_0^2C_1C_3C_4^2}{C'}e^{-\alpha\rho(o,x_0)}
    = C e^{-\alpha\rho(o,x_0)}, \label{Ceq}
\end{align}
where $C_1$ is the constant from \ref{decay}.
\end{proof}

The factor $C_4^2$ of the constant $C$ in \eqref{Ceq} depends on the location of $x_0$; more precisely, on $(o\xi)_{x_0}$.
To get a hand on this dependence, the notion of radial convergence comes into play; see \cref{subseclim}.

\begin{cor}\label{dense}
For any $\xi\ne\eta$ in $\partial_{\geo}H$ and set $X\subseteq H$, if $\xi\in\Lambda_H^{\rad}\cap\Lambda_X$, then $K(\cdot,\xi)|_X\neq K(\cdot,\eta)|_X$.
\end{cor}

\begin{proof}
Choose a sequence $(x_m)$ in $H$ which converges radially to $\xi$.
Then with $x_m$ in place of $x_0$, we can apply \cref{lemdense}, where the constant $C$ does not depend on $m$ and get that, for any $m$, $K(x,\xi)/K(x,\eta)$ is arbitrarily small for all $x\in X$ sufficiently close to $\xi$. (Not equal to one would be sufficient.) Such $x$ exist since $\xi\in\Lambda_X$.
\end{proof}

\begin{rem}\label{lemdenss}
In the case of Sullivan's process on hyperbolic spaces $H^k_\F$ and $X$ an orbit of $\Gamma$, there is a more elementary proof of \cref{dense}.
Namely, by \cref{SullMart},
\begin{align*}
    K(x,\eta)/K(x,\xi)  = e^{\delta(b_\xi(x)-b_\eta(x))}
\end{align*}
for any two points $\xi,\eta\in\Lambda_\Gamma$.
Now \cref{dense} does not require the above preparations, but follows easily from the description of bisectors in \cref{bisec}.
\end{rem}

Let now $X\subseteq H$ be a $\Gamma$-invariant subset of $H$ containing the origin.
Let $\mu=(\mu_x)_{x\in X}$ be a $\Gamma$-equivariant family of probability measures on $X$.
Assume that the associated random walk on $X$, also denoted  by $\mu$, is non-degenerate and transient with Green kernel $g=g(x,y)>0$, Martin kernel $k=k(x,y)$, and Martin boundary $\partial_\mu X$.
As in the introduction, we say that $\mu$ is an \emph{FLS-restriction} of $\mathcal Z$ to $X$ if the Martin kernels with origin some $o \in X$ satisfy $k(x,y)=K(x,y)$ for all $x,y \in X$ with $y\neq x, o$.

\begin{thm}\label{Martin}
Let $\mathcal{Z}$ be an Ancona process in $H$ and $(X,\mu)$ a $\Gamma$-invariant FLS-restriction of $\mathcal{Z}$, where $X\subseteq H$ is a $\Gamma$-invariant discrete subset.
Suppose that $\Lambda_X\subseteq\Lambda_H^{\rad}$.
Then there is a $\Gamma$-equivariant homeomorphism $\Lambda_X\to\partial_\mu X$ which extends the identity of $X$ continuously.
\end{thm}

\begin{proof}
Let $(x_n)$ be a divergent sequence in $X$, converging to a point $\xi$ in $\Lambda_X$.
Since $X\subseteq H$, $(x_n)$ converges to a point $\hat\xi\in\hat H=H\cup\partial_{\mathcal{Z}}H$, by \ref{geomar}.
Now $(X,\mu)$ is an FLS-restriction of $(H,\mathcal Z)$, hence the restriction $k(\cdot,\hat\xi)$ to $X$ of $K(\cdot,\hat\xi)$ is a Martin kernel in $\partial_\mu X$.
It follows that $\xi\to k(\cdot,\hat\xi)$ defines a continuous mapping $\Lambda_X\to\partial_\mu X$.
By \ref{geomar}, it is surjective, and, by \cref{dense}, it is injective.
Since both spaces are compact Hausdorff spaces, it is a homeomorphism.
\end{proof}

\begin{thm}\label{sullivan2}
In the situation of \cref{sullivan}, let $x\in H$.
Then LS-discretization of Sullivan's $\vf$-process $\mathcal S$ with respect to balanced LS-data induces an irreducible symmetric and $\Gamma$-invariant random walk $\mu$ on $X=\Gamma x$ with Martin boundary $\Gamma$-equivari\-antly homeomorphic to $\Lambda_\Gamma$.
Restriction defines an isomorphism from the space of $\lambda_0$-eigenfunctions of $\Delta$ on $H$ with growth of the order of $\varphi$ to the space of bounded $\mu$-harmonic functions on $X$.
Moreover, if $\vf$ is square-integrable, then $\mu$ has finite first moment and finite entropy.
\end{thm}

\begin{proof}
Except for symmetry, first moment, and entropy, the assertions are direct consequences of \cref{Martin}.
Symmetry of $\mu$ follows as in the proof of \cite[Theorem 2.7]{BL2}, observing that the Green kernel $g$ of $\mu$ is symmetric on $X$
since $\vf$, as the lift of a function on $M$, is constant on $X$.
For the penultimate assertion observe that $f\varphi$ is a $\lambda_0$-harmonic function of $\Delta$ in $H$ with growth $O(\varphi)$ if and only if $f$ is a bounded $(\Delta-2\nabla\ln\vf)$-harmonic function in $H$, which is, if and only if $f|_X$ is a bounded $\mu$-harmonic function. 

The assertions about first moment and entropy follow from \cref{fm} together with \cref{fmrem}.
\end{proof}

\begin{rem}\label{pinched2}
Assuming geometrical finiteness in the sense of Bowditch \cite{Bo}, it will be shown, in \cite{Nan2}, that $\mu$ has finite exponential moment if $\vf$ is square-integrable.
For cocompact $\Gamma$, this may be found in \cite[Lemma 3.13]{Ba} or \cite[Theorem 2.21]{BP3}. 
\end{rem}

\section{Relatively hyperbolic groups}\label{subrelhyp}
This section is an addendum to \cite{Nan}.
A finitely generated group $\Gamma$ is \emph{hyperbolic relative to a family $\Gamma_1,\dots,\Gamma_l$ of subgroups} if there is a proper Gromov hyperbolic geodesic space $H$ such that $\Gamma$ acts properly discontinuously by isometries on $H$ and such that there are open subsets $B_i\subseteq H$, $1\le i\le l$, with $\partial_{\geo}B_i=\partial B_i\cap\partial_{\geo}H=\{\xi_i\}$ such that
\begin{enumerate}
    \item the $B_i$ are \emph{precisely invariant}:
    for $g\in\Gamma$, we have $gB_i\cap B_j\ne\emptyset$ if and only if $i=j$ and $gB_i= B_i$ if and only if $i=j$ and $g\in\Gamma_i$;
    \item $\Gamma$ acts cocompactly on $H \setminus \cup_{1\le i\le l}\Gamma B_i$;
\end{enumerate}
compare with \cite{Bo1,GroMan}.
The stabilizers $\Gamma_{i}$ and their conjugates are called the \emph{maximal parabolic} subgroups of $\Gamma$.
If $H'$ is any other proper Gromov hyperbolic geodesic space, where $\Gamma$ acts with the above properties, then its Gromov boundary is $\Gamma$-equivariantly homeomorphic to $\partial_{\geo}H$. 
The space $\partial_{\geo}H$ is called the \emph{Bowditch boundary} of the data $(\Gamma,\{\Gamma_{1},\dots,\Gamma_{l}\})$.

\begin{exa}
If $\Gamma$ acts geometrically finitely on a Hadamard manifold with pinched negative curvature, then $\Gamma$ acts geometrically finitely on the convex hull $H$ of the limit set $\Lambda_\Gamma$ in the sense of Bowditch \cite{Bo1} and consequently is relatively hyperbolic.
The Bowditch boundary equals $\Lambda_\Gamma$. More generally, if $H$ is a proper, complete Gromov hyperbolic metric space and $\Gamma$ acts geometrically finitely on $\Lambda_\Gamma$ by a \textit{convergence action}, then $\Gamma$ is relatively hyperbolic; see \cite{Y}.
\end{exa}

\begin{exa}
Any (non-trivial) free product $P_1\ast\cdots\ast P_l$ of finitely many finitely generated groups is hyperbolic relative to the $P_i$ which are infinite, the product being hyperbolic if all $P_i$ are hyperbolic; see for example \cite{Osin}.
\end{exa}

\subsection{Normal subgroups of relatively hyperbolic groups}\label{secrh}
A reversible Markov chain $(\mathcal{Z},P,m)$ is a Markov chain $\mathcal Z$ in a graph $Y$ with stochastic matrix $P$ and a locally finite measure $m$ such that $m(y)P(y,y')=m(y')P(y',y)$, for all $y,y'\in Y$.

Let $\Gamma$ be a relatively hyperbolic group with virtually nilpotent parabolic subgroups of infinite cardinality, with a given finite symmetric generating set $S$, where $S$ is assumed to contain finite symmetric sets of generators of a set of representatives of the maximal parabolic subgroups. We denote below by $d_S$ the metric relative to $S$ in the Cayley graph of $\Gamma$.
In \cite{Nan}, a reversible $\Gamma$-Ancona Markov chain $(\mathcal{Z},P,m)$ is constructed in a proper, geodesic, hyperbolic graph $(H,\rho)$ together with an origin $o\in H$ such that the following conditions hold:
\begin{enumerate}
\item $\Lambda_\Gamma=\partial_{\geo} H$;
\item every point of $\partial_{\geo}H$ is a radial limit point for $H$;
\item $P,m$ are $\Gamma$-invariant and there is a constant $C>0$ such that $m(B_y)=C^{\pm1}$ for each $y\in H$ and $m(x)=C^{\pm 1}m(y)$, whenever $x,y \in H$ are joined by an edge;
\item $\p_o[Z_1= so]\geq c_0$, for each $s\in S$, for a constant $0<c_0<1$;
\item there is $C'\geq 1$ such that, if $x,y,z\in H$ and $x\neq y$, then $ G(x,y)\leq C'^{\,\rho(x,z)}\cdot G(z,y)$; moreover, $m(x)\cdot G(x,x)\leq C'^{\,\rho(x,y)}\cdot G(y,x)$ and $G(x,x)=C'^{\pm 1}$ for each $x$;
\item if $x\neq y$ and $\text{dist}(z,[x,y])\leq r$, then $$G(x,y)\leq  C'^{r+1}\cdot \frac{G(x,z)}{m(z)}\cdot G(z,y);$$
\item there are $\alpha>0, C''>0$  such that if $x\neq y$, $\frac{G(x,y)}{m(y)}\leq C''e^{-\alpha\rho(x,y)}$;
\item $\Gamma x$ is recurrent for $\mathcal{Z}$ with finite $j$-th hitting time $\tau_j$, for $j\in\N$, and an FLS-restriction of $\mathcal{Z}$ to $\Gamma$ given by $$\mu(go)=\p_o[Z_{\tau_1}=go];$$
\item there is a constant $c>0$, depending on $S$ such that $$\E_x\left[e^{c\cdot\rho(x,Z_{\tau_1})}\right]<\infty;$$
\item for any $g\in\Gamma$, there is a path $p_{eg}\subset H$, called a \emph{preferred path}, for which there exist maximal parabolic subgroups $(P_i)_{i=1}^k$ and horoballs $(B_i)_{i=1}^k$, precisely invariant with respect to the $P_i$ such that the difference $p_{eg}\backslash (\cup_{i=1}^k B_i)$ is contained in $\Gamma o$.
Moreover, denoting the first and last points of $p_{eg}\cap B_i$ by $a_io$ and $b_io$ with $a_i,b_i\in \Gamma$, we have \[2\rho(o,go)\geq\sum_{i=1}^k\rho(a_io,b_io)+L_S(p_{eg}\backslash \cup_{i=1}^k B_i),\]
where the second term on the right side above is understood as a sum where we add the $d_S$-length of all the connected subpaths (after identifying them with paths in $\Gamma$).
\end{enumerate}
The graph $H$ above is obtained by a slight variation of a construction of Groves-Manning \cite{GroMan}. For (1), (2), (3) and (4), see \cite[Section 2]{Nan}. For (5) and (6) see \cite[Section 3]{Nan}. For (7) see proof of \cite[Theorem 4.2]{Nan}. For (8) and (9) see \cite[Corollary 0.4]{Nan}. For (10), see \cite[Section 2]{Nan}.

Let $\Gamma'\trianglelefteq\Gamma$  be a normal subgroup with quotient homomorphism $q:\Gamma\to Q:=\Gamma/\Gamma'$. Let $A_j=q(Z_{\tau_j})$ and define inductively $T_0=0$ and \[T_j=\min\{i>T_{j-1}\mid A_i\neq A_{T_{j-1}}\}.\]
We get a random walk in $Q$, namely $(A_{T_{j}})_{j=0}^{\infty}$, with generating probability $$\mu_Q(w)=\p_x[A_{T_1}=w].$$ 

We specify a Cayley structure in the group $Q$ of deck transformations related to the one in $\Gamma$. For that purpose denote by $[g]$ the coset of $\Gamma'$ containing $g$ and define a length $l_Q$ in $(\Gamma,d_S)$ by setting for any path $\lambda=\cup_{i=1}^{j-1}[g_i,g_{i+1}]$ in $(\Gamma,S)$, $l_Q(\lambda)=\sum_{[g_i]\neq [g_{i+1}]}1$. Then define the metric $d_Q$ by letting for $g,h\in \Gamma$,
\[d_Q(q(g),q(h))=\min_{\lambda}l_{Q}(\lambda),\] where the minimum is over the set of paths $\lambda$ in $(\Gamma,d_S)$ joining a point in $[g]$ to a point in $[h]$.

\begin{lem}\label{proj1}
The set \[S_Q:=\{[\gamma]\in Q\mid l_Q(\lambda)=1,\, \text{for some}\;\lambda\;\text{joining}\;e,g;\; g\in [\gamma]\neq \Gamma'\}\] is a finite symmetric generating set. The left invariant Cayley metric of $(Q,S_Q)$ is $d_Q$.
\end{lem}
\begin{rem}\label{Lip} 
Note that $S_Q$ as defined above is the set of all $[\gamma]$ which are at $d_Q$-distance one of $\Gamma'$. Note also that $q$ is Lipschitz, that is, 
\[d_Q([\gamma_1],[\gamma_2])\leq d_S(\gamma_1,\gamma_2),\]
for any $\gamma_1,\gamma_2\in H$.
\end{rem}
\begin{proof}
Finiteness and symmetry of $S_Q$ follow, once it is checked that $[\gamma]\in S_Q$, if and only if $d_S(e,[\gamma])=1$ (as $d_S(e,[\gamma])=d_S(e,[\gamma^{-1}])$ because $\Gamma'$ is normal and $(\Gamma,d_S)$ is locally finite). If $[\gamma]\in S_Q$, then there are $h_1,h_2\in\Gamma'$, and a path $\lambda$ from $e$ to $\gamma h_2$, with $h_1$ the last point of $\Gamma'$ in $\lambda$, such that $h_1$, and $\gamma h_2$ are related in $S$. So $\gamma h_2=h_1s$, for some $s\in S$, so that $s=h_1^{-1}\gamma h_2=\gamma(\gamma^{-1}h_1\gamma)h_2$, so that $d_S(e,[\gamma])=1$. Next if $d_S(e,[\gamma])=1$, then there is $h\in \Gamma'$, such that $\gamma h=s\in S$. So $\gamma=sh^{-1}=(sh^{-1}s^{-1})s$, so that $[s]=[\gamma]$ and 
\[1=l_Q([sh^{-1}s^{-1},\gamma]_S)=l_Q([e,(shs^{-1})\gamma]_S)=l_Q([e,\gamma(\gamma^{-1}shs^{-1}\gamma)]_S),\] which means that $[\gamma]\in S_Q$.

Note that $S_Q$ generates $Q$ is clear. Indeed, for $g\in \Gamma$, there is a path $\lambda$ joining $e$ and $g'\in [g]$ such that $d_Q(e,[g])=l_Q(\lambda)$, so there are $g_i$ and $k$ such that $d_Q(e,[g])=\sum_{i=0}^{k-1}l_Q(\lambda_{g_i,g_{i-1}})$, for some path $\lambda_{g_{i},g_{i+1}}$ joining $g_i$ and $g_{i+1}$, where $[g_0]=e$ and $[g_k]=[g]$, and $l_Q(\lambda_{g_i,g_{i+1}})=1$ for each $i$. Hence, $l_Q(\lambda_{e,g_{k-1}^{-1}g_k})=1$, thus $g_k=g_{k-1}\gamma_{k}$, for some $[\gamma_k]\in S_Q$, so the claim follows.
\end{proof}

\begin{proof}[Proof of \cref{bowbou}]
If the parabolic subgroups are of finite cardinality, then $\Gamma$ is hyperbolic. In this case we take $\mathcal Z$ to be a simple random walk with respect to a finite symmetric generating set $S$, and still write $(H,\rho)$ for the Cayley graph $(\Gamma,d_S)$. It is easy to check that $\mathcal Z$ is an Ancona process.

Let us now call a subgroup maximal parabolic only if it is infinite and check that the reversible Markov chain $\mathcal Z$ described in the beginning of this section is an Ancona process. Note that by properties (3) and (5) of $\mathcal Z$, $G(x,B_y)\approx \frac{G(x,y)}{m(y)}$ when $x\neq y$.  Then (A1) follows from the last part of property (5) and from property (7) of $\mathcal Z$. Property (5) implies (A2) and property (6) implies (A3). The Martin boundary of $\mathcal Z$ is shown to be $\partial_{\geo}H$ in \cite[Section 3]{Nan}, which is (A4); by property (1) this is the Bowditch boundary.  

Now since $\Lambda_\Gamma'=\partial_{\geo}H$ (as $\Gamma'$ is normal), by \cref{Martin}, we only need to check that $\Gamma'$ is recurrent for $\mathcal{Z}$. This follows if the process $(A_{T_j})_{j=0}^\infty$ is recurrent in $Q$, and in this case $(Z_{T_j})_{j=1}^\infty$ is the required random walk. We check recurrence next.

We will show that $(Z_{T_j})_{j=1}^\infty$ has finite first (in fact exponential) moment with respect to $d_Q$.
First we claim that there is $b>0$ such that for $r>0$, \[d_Q(e_Q,q(g))\geq r\implies \rho(o,go)\geq b\cdot r.\]

When $\Gamma$ is hyperbolic, this follows from \cref{Lip}. To check this in general, write using notation of (10), to define a path in $(\Gamma,d_S)$, using $p_{eg}$ as follows. For $a_io$, $b_io$ the points of first entry and final exist from $B_i$ of $p_{eg}$, define the path in $\Gamma$ (identified with the corresponding path in $\Gamma o$) $\lambda_{eg}=\cup_{i=1}^k [a_io,b_io]_S\cup (p_{eg}\backslash \cup_{i=1}^k B_i)$. Then
\[d_Q(e,q(g))\leq \sum_{i=1}^k d_Q(q(a_i),q(b_i))+d_S(p_{eg}\backslash \cup_{i=1}^k [a_i,b_i]_S).\]
Note for $i\in\{1,\ldots,k\}$,
by the hypothesis that $P_i\cap \Gamma'$ is finite index in $P_i$, and the quasiconvexity of $P_i$ in $\Gamma$ (see \cite{DS} or \cite{GP}) and the description of $d_Q$ in terms of $l_Q$, that \[d_Q(q(a_i),q(b_i))\lesssim \diam_Q(q([a_i,b_i]_{P_i}));\]
and the latter is uniformly in $i$, bounded above (since there are finitely many conjugacy classes of maximal parabolic subgroups and $\Gamma'$ is normal), by, say $M$. So, $\sum_{i=1}^kd_Q(q(a_i),q(b_i))\leq M\sum_i\rho(a_io,b_io)$. In view of (10) above, the claim is verified.

Then we have (using the Chernoff bound) and property (4),
{\allowdisplaybreaks
\begin{align*}
\p_o[d_Q(e_Q,A_{T_1})\geq r]&=\sum_{j\ge 1} \p_o[d_Q(e_Q,A_j)\geq r, T_1=j]\\
&\leq \sum_{1\leq  j\leq ar}\p_o[d_Q(e_Q,A_j)\geq r] + \sum_{j\geq ar} \p_o[T_1=j] \\
& \leq \sum_{1\leq j\leq ar}\p_o[\rho(o,Z_{\tau_j})\geq b\cdot r] + \sum_{j\geq ar} \p_o[T_1=j]\\
&\leq  e^{-cbr}\sum_{1\leq j\leq ar} m^{j} + \sum_{j\geq ar} (1-c_0)^{j -1} \leq C\cdot e^{-\beta r},
\end{align*}}
for suitable $C>0$, $\beta>0$ obtained by choosing $a>0$ small. Thus $A_{T_1}$ has finite exponential moment for suitable range of exponents. Given the assumption on $Q$ then, $(A_{T_j})_{j=0}^\infty$ is recurrent as needed \cite[T1 on p.\,83]{Spi}. 
\end{proof}

\begin{exa}
Let $\F_n$ be the free group with $n\ge2$ generators and commutator subgroup $G=[\F_n,\F_n]$, where $\F_n/G\cong\Z^n$.
Any proper subgroup $G'$ of $\F_n$ containing $G$ is normal
and, by normality, an infinitely generated free group.
If $\F_n/G'$ is virtually $\Z$ or virtually $\Z^2$, one obtains a random walk $\mu$ on $G'$ such that $\partial_\mu G'=\partial_{\geo}\mathbb{F}_n$, the Gromov boundary of $\mathbb{F}_n$ with respect to the word metric, a Cantor set.
On the other hand, consider an action of $\F_n$ on the hyperbolic plane $H_\R^2$ with a surface of genus $g$ with $p$ punctures as quotient, where $n=2g+p-1$ and $p\ge1$ and where we call the generators of $\F_n$ correspondingly by \[a_1,b_1,\dots,a_g,b_g,c_1,\dots,c_{p-1}.\]
If $G'$ contains $c_1,\dots,c_{p-1}$,
then also $c_p=[a_1,b_1]\cdots[a_g,b_g]c_1^{-1}\cdots c_{p-1}^{-1}$, the loop around the $p$-th puncture.
We get a random walk $\mu'$ on $G'$ with respect to the associated relatively hyperbolic structure 
\[ (\F_n,\{\la c_1\ra,\dots,\la c_{p-1}\ra,\la c_p\ra\}) \]
such that $\partial_{\mu'}G'$ equals the Bowditch boundary of the relatively hyperbolic structure,
which is $\partial_{\geo}H_\R^2$, the circle.
\end{exa} 

\section{Non-harmonicity of boundary points}\label{secsem}
In this section, we need a small variation in the definition of Ancona processes, namely we replace \ref{decay} as follows.

\begin{dfn}\label{Procp}
We say that $\mathcal{Z}$ is an \emph{Ancona' process} if
\begin{enumerate}[leftmargin=*, label=(A\arabic*')]
\item\label{decayp}
for any $x \in H$ there are $\alpha>0$ and $C_x\ge 1$ such that for any $y \in \Gamma x$ with $\rho(x,y) \geq 2$ we have that
\begin{equation*}
    G(y,B_{x}) \le C_x e^{-\alpha \rho(x,y)}.
\end{equation*}
\end{enumerate}
\end{dfn}
\noindent
and if \ref{harnackin} -- \ref{geomar} hold.

\begin{rem}\label{ancp}
In our most important applications, $\Gamma$-invariant diffusion processes on manifolds, and the Markov chain $\mathcal Z$ of \cref{secrh}, both \ref{decay} and \ref{decayp}, are satisfied.
\end{rem}

Suppose now that $\mathcal Z$ is an Ancona' process on a proper $\delta$-hyperbolic space $H$ with Green and Martin kernels $G=G(x,y)$ and $K=K(x,\xi)$, respectively.
Let $\Gamma$ be a group which acts properly discontinuously and isometrically on $H$, preserving $\mathcal{Z}$.

\begin{dfn}\label{cococ}
We say that a point $\xi\in\partial_{\geo}H$ is a \emph{cuspidal limit point of $\Gamma$} and write $\xi\in\Lambda_\Gamma^{\cus}$ if the stabilizer $\Gamma_\xi$ of $\xi$ in $\Gamma$ is infinite and leaves a Busemann function $b$ centered at $\xi$ invariant.
Moreover, we require that there is a constant $c>0$ such that, for any $x\in H$ and $y\in\Gamma x$, $(y\xi)_{gx}\le c$ for some $g\in\Gamma_\xi$.
\end{dfn}

\begin{rem}
Let $\xi$ be a cuspidal limit point of $\Gamma$ and $(z_n)$ be a sequence converging to $\xi$ such that $b$ is the limit of the functions $b_n=\rho(.,z_n)-\rho(z_n,x)$.
Let $(g_m)$ be a sequence in $\Gamma_\xi$ such that $\rho(g_mx,x)$ diverges.
Then we have
\begin{align*}
    2(g_mx z_n)_x = \rho(g_mx,x) + \rho(z_n,x) - \rho(g_mx,z_n) =  \rho(g_mx,x) - b_n(g_mx).
\end{align*}
For $n\to\infty$, the left hand side tends to $2(g_mx\xi)_x$ and the right to $\rho(g_mx,x)-b(g_mx)$.
Since $b$ is invariant under $\Gamma_\xi$, we get that, as $m\to\infty$, $(g_mx\xi)_x$ tends to infinity and therefore that $(g_mx)$ tends to $\xi$.
In particular, $\xi\in\Lambda_\Gamma$, thus justifying the above notation $\Lambda_\Gamma^{\cus}$ for the set of cuspidal limit points.
\end{rem}

\begin{exa}\label{exaculipo}
If $H$ is a Hadamard manifold with pinched negative  curvature, $\Gamma$ is geometrically finite on $H$, and $\xi\in\partial_{\geo}H$ is a parabolic point (in the sense of Bowditch \cite{Bo}), then $\xi\in\Lambda_\Gamma^{\cus}$.
\end{exa}

\begin{lem}\label{coco}
For a cuspidal limit point $\xi$ of $\Gamma$ as above and any $R\in\R$, $b$ attains only finitely many values on $X$ below $R$.
\end{lem}

\begin{proof}
We can assume $b(x)=0$.
Now for any $y\in X \cap \{ b\le R\}$, there exists $g\in\Gamma_\xi$ such that $(y\xi)_{gx}\le c$.
Keeping in mind that $b(y)=b(y)-b(gx) \leq R$, we obtain from \cref{R2c} that $\rho(x,g^{-1}y) = \rho(gx,y) \le R  + 2c$. Denoting by $\bar{B}(x,R + 2c)$ the closed ball with center $x$ and radius $R+2c$, we readily see that $S = X \cap \bar{B}(x,R + 2c)$ is finite, $X$ being discrete. We deduce from the above that $g^{-1} y \in S$, and therefore, $b(y) = b(g^{-1}y) \in b(S)$. This means that $b$ attains only finitely many values on $X \cap \{  b \leq R\}$.
\end{proof}

\begin{thm}\label{nonharm}
Let $X=\Gamma x$ for some $x\in X$ and $\xi\in\Lambda_\Gamma^{\cus}\cap\Lambda_H^{\rad}$.
Suppose that $\Gamma_\xi\ne\Gamma$.
Let $b$ be a $\Gamma_\xi$-invariant Busemann function and $\mu$ an FLS-reduction of $\mathcal{Z}$ to $X$.
Then $k(\cdot,\xi)=K(\cdot,\xi)|_X$ is not $\mu$-harmonic.
\end{thm}

\begin{proof}[Proof for Sullivan's process on symmetric spaces]
If $H$ is a symmetric space of negative  curvature, then any Busemann function $b$ on $H$ centered at $\xi$ is invariant under $\Gamma_\xi$.
Denote by $b_\xi$ the Busemann function centered at $\xi$ with $b_\xi(o) = 1$.
By \cref{coco}, $b_\xi$ achieves its minimum on $X$, say at $x$.
Since $\Gamma_\xi\ne\Gamma$, there is a point $y\in X$ with $b_\xi(y)>b_\xi(x)$.
Now $K(z,\xi) = \exp(-\delta b_\xi(z))/\vf(z)$ for any $z \in H$, in view of \cref{SullMart}.
Since $\vf$ is $\Gamma$-invariant and $o \in X$, we derive that $\vf(z) = 1$ for any $z \in X$. Therefore,
\begin{align*}
    k(x,\xi) = e^{-\delta b_\xi(x)} > \sum_{z\in X}\mu_x(z)e^{-\delta b_\xi(z)} = \sum_{z\in X}\mu_x(z)k(z,\xi),
\end{align*}
which yields that $k(\cdot,\xi)$ is not $\mu$-harmonic.
\end{proof}

The general case is more elaborate and requires some preparations.

\begin{lem}\label{4d2c}
Suppose that an isometry $g$ of $H$ fixes a point $\xi\in\partial_{\geo}H$  and a Busemann function $b$ centered at $\xi$.
Let $(x_m)$ be a sequence in $H$  converging radially to $\xi$ with $\liminf(o\xi)_{x_m}=c$.
Then
\begin{align*}
    \limsup\rho(x_m,gx_m)\le4\delta+2c.
\end{align*}
\end{lem}

\begin{proof}
The proof of \cref{4d2c} is motivated by the arguments on \cite[top of p.\,42]{GH}. 
Let $(z_n)$ be a sequence in $H$ converging to $\xi$ such that the invariant Busemann function $b$ is the limit of the functions \[b_n=b_n(x)=\rho(x,z_n)-\rho(o,z_n)\] locally uniformly.
For $\ve$ given and $m$ sufficiently large, we then have
\begin{align*}
	\rho(z_n,x_m) + \rho(x_m,o)
	&= \rho(z_n,o) + \rho(z_n,x_m) + \rho(o,x_m) - \rho(o,z_n) \\
	&= \rho(z_n,o) + 2(oz_n)_{x_m}  \le \rho(z_n,o) + 2c + 2\ve.
\end{align*}
Hence radial convergence means that the triangle inequality is an equality up to the corresponding constant $2c$.
Since $g$ is an isometry fixing $\xi$, the sequence of $y_m=gx_m$ converges also radially to $\xi$ and hence
\begin{align*}
	\rho(z_n,y_m) + \rho(y_m,y_0)
	\le \rho(z_n,y_0) + 2c + 2\ve.
\end{align*}
By the above,
\begin{align*}
	2(x_my_m)_{z_n} &= \rho(x_m,z_n) + \rho(y_m,z_n) - \rho(x_m,y_m) \\
    &= \rho(z_n,o) - \rho(x_m,o) + 2(oz_n)_{x_m} \\
    &\hspace{11mm}+ \rho(z_n,y_0) - \rho(y_m,y_0) + 2(y_0z_n)_{y_m}  - \rho(x_m,y_m) \\
    &= 2(oy_0)_{z_n} + \rho(o,y_0) + 2(oz_n)_{x_m} + 2(y_0z_n)_{y_m} \\
    &\hspace{11mm}- \rho(x_m,o) - \rho(y_m,y_0) - \rho(x_m,y_m) 
\end{align*}
Since the last three terms in the penultimate line are bounded, the first two terms in the last line tend to $-\infty$, and the last term is negative,
we get that \[(oy_0)_{z_n}>(x_my_m)_{z_n}+2\delta\] for all sufficiently large $m$ and $n$.
Then
\begin{align*}
	(x_my_m)_{z_n} 
	&\ge \min\{(x_mo)_{z_n},(oy_0)_{z_n},(y_0,y_m)_{z_n}\} - 2\delta \\
	&\ge \frac12\min\{\rho(x_m,z_n)+\rho(o,z_n)-\rho(o,x_m), \\
	&\hspace{22mm}\rho(y_0,z_n)+\rho(y_m,z_n)-\rho(y_0,y_m)\} - 2\delta \\
	&\ge \min\{\rho(x_m,z_n),\rho(y_m,z_n)\} - 2\delta - c - \ve.
\end{align*}
Hence, with $d=2\delta + c + \ve$,
\begin{align*}
	\rho(x_m,y_m)
	&= \rho(x_m,z_n) + \rho(y_m,z_n) - 2(x_my_m)_{z_n} \\
	&\le \rho(x_m,z_n) + \rho(y_m,z_n) - 2\min\{\rho(x_m,z_n),\rho(y_m,z_n)\} + 2d\\
	&=  \rho(x_m,z_n) - \rho(o,z_n) + \rho(y_m,z_n) - \rho(o,z_n) \\
	&- 2\min\{\rho(x_m,z_n)-\rho(o,z_n),\rho(y_m,z_n)-\rho(o,z_n)\} + 2d.
\end{align*}
Now up to $\ve$, the first two terms on the right are equal to $b(x_m)=b(y_m)$
and the third to $2b(x_m)=2b(y_m)$, for all sufficiently large $n$.
This completes the proof of \cref{4d2c}.
\end{proof}

\begin{lem}\label{Minv}
Let $\xi\in\partial_{\geo}H$ be a radial limit point of $H$.
Suppose that an isometry $g$ of $H$ preserves $\xi$ and a Busemann function centered at $\xi$.
Then there is a constant $A>0$ such that
\begin{align*}
    K(x,\xi)/K(gx,\xi) = A^{\pm1} \quad\text{for any $x\in H$ and $g\in\Gamma_\xi$.}
\end{align*}
If $H$ is a Hadamard manifold with pinched negative  curvature, then $K(x,\xi)=K(gx,\xi)$.
\end{lem}

\begin{proof}
We have
\begin{align*}
    K(gx,\xi) &= K(gx,g\xi) = \lim_{m\to\infty}\frac{G(gx,gx_m)}{G(o,gx_m)} \\
    &= \lim_{m\to\infty}\frac{G(x,x_m)}{G(o,x_m)}\frac{G(o,x_m)}{G(o,gx_m)}
    = K(x,\xi)\lim_{m\to\infty}\frac{G(o,x_m)}{G(o,gx_m)}
\end{align*}
where the last factor is bounded by \ref{harnackin} and \cref{4d2c}.

If $H$ is a Hadamard manifold with pinched negative curvature, then we may choose $x_m=c(m)$ for some geodesic ray $c$ to $\xi$.
Then $\rho(x_m,gx_m)$ tends to $0$ by \cref{asym} and hence $G(o,x_m)/G(o,gx_m)$ to $1$ by the gradient estimate of Cheng and Yau \cite[Theorem 6]{CY}. 
\end{proof}

\begin{proof}[Proof of \cref{nonharm}]
Without loss of generality, we assume that $b(x)=0$.
 
Given $\ve>0$, there is $R\geq 2$ such that, for any $y\in X$, $b(y)>R$ implies that $K(y,\xi)<\ve$. To see this for any $y$ as above, let $g\in \Gamma_\xi$ be such that $(y\xi)_{gx}\leq c$ with $c$ as in \cref{cococ}.
Then, with $C_0$ as in \eqref{Mapprox} and $C_1=C_1(c)$ as in \ref{ancin},
\begin{align*}
    K(y,\xi)/K(gx,\xi) \le C_0 C_1 G(y,B_{gx}) = C_0 C_1 G(g^{-1}y,B_x).
\end{align*}
Now \ref{decayp} and \cref{Minv} yield
\begin{align*}
    K(y,\xi)\le AC_0C_1C_x e^{-\alpha R}K(x,\xi),    
\end{align*}
using that $\rho(g^{-1}y,x)>R\geq2$.

Suppose now that the restriction $f$ of $K(. ,\xi)$ to $X$ is $\mu$-harmonic.
Let $\ve>0$ be small and $R\ge2$ as above.
Then $f$ is bounded on $X\cap\{b\le R\}$, by \cref{coco}, \cref{Minv}, and \ref{harnackin}.
Furthermore, $f\le\ve$ on $X\cap\{b>R\}$.
Hence $f$ is a bounded $\mu$-harmonic function on $X$ and the supremum of $f$ is achieved in $X\cap\{b\le R\}$.
By \cref{coco}, there exists some $a\ge0$ such that the supremum of $f$ on $X$ is equal to the supremum $F$ of $f$ over $X \cap \{ b = a \}$.
Choose a point z on $X \cap \{ b = a \}$ and a sequence $g_n$ in $\Gamma_\xi$ such that the functions $f_n = f \circ g_n$ satisfy $f_n(z)\to F$.
Up to passing to a subsequence, $f_n$ converges pointwise to some $h$.
Using that the $f_n$ are $\mu$-harmonic and uniformly bounded, it follows from the dominated convergence theorem that $h$ is also $\mu$-harmonic.
Moreover, $h\leq F$, while $h(z) = F$, which implies that $h$ is constant, by the maximum principle.
However, since $g_n\in\Gamma_\xi$, it follows from 1) that $h$ attains arbitrarily small positive values, which is a contradiction.
\end{proof}

\section{Harmonicity of boundary points}\label{secharm}

Throughout this section, let $H$ be a Hadamard manifold of  curvature $-b^2 \leq K_H \leq -a^2 < 0$, $M = \Gamma \backslash H$ a non-compact, finite volume quotient and $p \colon H \to M$ the covering map. Choose $o \in F_0 \subset V_0 \subset M$ with $F_0$ closed and $V_0$ open and evenly covered, such that the preimages $F = p^{-1}(F_0)$ and $V = p^{-1}(V_0)$ constitute balanced LS-data for $X = p^{-1}(o)$. Finally, let $(\mu_y)_{y \in H}$ be the corresponding LS-measures. The goal of this section is to prove the following:

\begin{thm}\label{yesharm}
If $\xi \in \partial_{\geo} H$ is not a cuspidal point of $\Gamma$, then $K(\cdot,\xi)$ is swept by $F$. In particular, the restriction of $K(\cdot,\xi)$ to $X$ is $\mu$-harmonic.
\end{thm}

The following auxiliary result is motivated by \cite[Theorem 3.1]{BPe}.

\begin{thm}\label{sweep preim comp}
Let $p \colon N_1 \to N_0$ be a normal Riemannian covering and $D \subset N_0$ a compact domain with $C^2$ boundary. Then any positive harmonic function $h$ on $N_1$ satisfies $h(x) = \beta_x(h)$ for any $x \in p^{-1}(D)$, where $\beta$ stands for the balayage onto $N_1 \setminus p^{-1}(D^\circ)$.
\end{thm}

\begin{proof}
Fix $o \in D^\circ$ and set $X = p^{-1}(o)$. Consider the Dirichlet fundamental domains
\[
\tilde{D}_y = \{ x \in N_1 : d(x,y) \leq d(x,z) \text{ for any } z \in  X\}
\]
with $y \in X$, and let $D_y = \tilde{D}_y \cap p^{-1}(D)$. It should be noticed that $D_y$ is compact and $D_{gy} = g D_y$ for any $y \in X$ and $g \in \Gamma$, where $\Gamma$ stands for the deck transformation group of $p$. Choose a partition $(U_y)_{y \in X}$ of $p^{-1}(D)$ with $U_y \subset D_y$ for any $y \in X$. Choose $y \in X$ and a compact domain $W_y \subset p^{-1}(D)$ with piecewise $C^2$ boundary containing a neighborhood (in $p^{-1}(D)$) of $D_y$. 
The existence of such $W_y$ follows for instance by considering an exhaustion $f \in C^\infty(2M)$ of the doubled manifold $2M$ (obtained by gluing two copies of $M:=p^{-1}(D)$ along their boundary) which is invariant under reflection along $\partial M$, and setting $W_y = M \cap \{ f \leq c \}$ for a sufficiently large regular value $c$ of $f$, where we note that $\{f = c\}$ intersects $\partial M$ transversally. Set $F_y = W_y \cap \partial p^{-1}(D)$ and $V_y = \partial W_y \setminus F_y$. 

Then there exists $c \geq 1$ such that
\begin{equation}\label{est}
\frac{h(z_1)}{h(z_2)} = c^{\pm 1} \text{ and } \beta^{W_y}_{z_1}(F_y) \geq c^{-1}
\end{equation}
for any $z_1,z_2 \in D_y$, where $\beta^{W_y}$ stands for the balayage onto $N_1 \setminus W_y^\circ$. Indeed, the first assertion follows from the gradient estimate of Cheng and Yau \cite[Theorem 6]{CY} after noticing that there exists $\delta > 0$ such that the $\delta$-tubular neighborhood $D_\delta$ of $D$ is compact and $h$ is harmonic in the preimage $p^{-1}(D_\delta)$ which is the $\delta$-tubular neighborhood of $p^{-1}(D)$. As for the second assertion, consider a continuous function $f$ on $\partial W_y$ which is one on $\partial W_y \cap D_y$ and zero on $V_y$. Then $\beta^{W_y}_{z_1}(F_y) \geq \beta^{W_y}_{z_1}(f) = \tilde{f}(z_1)$ for any $z_1 \in W_y$, where $\tilde{f}$ stands for the harmonic extension of $f$ in $W_y$. This establishes the assertion, since $\tilde{f}$ attains positive minimum over $W_y \cap D_y$.

For any $g \in \Gamma$, let $W_{gy} = gW_y$, $F_{gy} = g F_y$ and $V_{gy} = g V_y$. Then it is evident that (\ref{est}) holds for any $z_1,z_2 \in U_y$ and any $y \in X$. For any $x \in p^{-1}(D)$ there exists unique $y \in X$ such that $x \in U_y$, and since $W_y$ is compact, it follows that 
\[
h(x) = \beta_x^{W_y}(h) = \beta_x^{W_y}(h|_{F_y}) + \beta_x^{W_y}(h|_{V_y}) 
\]
where
\[
\beta_x^{W_y}(h|_{F_y}) \geq c^{-2} h(x),
\]
in view of (\ref{est}). Given any measure $\mu$ on $U_y$, let
\[
\beta_{\mu} = \int_{U_y} \beta_z^{W_y} \mu(dz).
\]
More generally, if $\mu$ is a measure on $p^{-1}(D)$, denote by $\mu_{y}$ with $y \in X$ its restriction to $U_y$, and set
\[
\beta_\mu = \sum_{y \in X} \beta_{\mu_y}.
\]
Given any such measure $\mu_n$ with $\mu_n(h) < + \infty$, we obtain that
\[
\mu_n(h) = \tau_{n+1}(h) + \mu_{n+1}(h),
\]
where 
\begin{eqnarray}
\tau_{n+1}(h) &=& \sum_{y \in X} \beta_{\mu_n}(h|_{F_y}) \geq c^{-2} \mu_n(h), \nonumber \\
\mu_{n+1}(h) &=& \sum_{y \in X} \beta_{\mu_n}(h|_{V_y}) \leq (1-c^{-2}) \mu_n(h). \nonumber
\end{eqnarray}
For any $x \in p^{-1}(D^\circ)$, let $\mu_0 = \delta_x$. It follows that
\[
h(x) = \sum_{i=1}^{n} \tau_i(h) + \mu_{n+1}(h)
\]
for any $n \in \mathbb{N}$, where $\mu_{n+1}(h) \leq (1 - c^{-2})^n h(x) \rightarrow 0$. We conclude that
\[
h(x) = \beta_x(h)
\]
for any $x \in p^{-1}(D^\circ)$, as we wished.
\end{proof}

\begin{lem}\label{mean value prop}
Let $\eta \in \partial_{\geo} H$ be a cuspidal point of $\Gamma$ and $B$ a precisely invariant horoball centered at $\eta$. Then any bounded harmonic function $h$ on $B$ satisfies $h(x) = \bar{\beta}_x(h)$ for any $x \in B$, where $\bar{\beta}_x$ stands for the balayage onto $H \setminus B^\circ$.
\end{lem}

\begin{proof}
Since $\Gamma \backslash H$ is recurrent, it follows that $H \setminus B^\circ$ is recurrent, that is, $\bar{\beta}_x(H \setminus B^\circ) = 1$ for any $x \in B$. Therefore, $h$ is swept by $H \setminus B^\circ$, bounded harmonic functions being swept by recurrent subsets.
\end{proof}

\noindent{\emph{Proof of \cref{yesharm}.}} Fix a union of cusps $C \subset M$ about all parabolic points, disjoint from the closure of $V_0$. 
Write $C_0 = p^{-1}(C) \subset H$ as the union of disjoint horoballs $\{ b_n \leq 0 \}$ with $n \in \mathbb{N}$, where $b_n$ are Busemann functions centered at parabolic points of $\Gamma$. Consider also the union $C_1$ of $\{b_n \leq -1\}$ with $n \in \mathbb{N}$. It follows from Theorem \ref{sweep preim comp} for $D = M \setminus (p(C_1) \cup F)^\circ)$ that the Martin kernel $h = K(\cdot , \xi)$ satisfies
\[
h(x) = \beta_x(h) = \beta_x(h|_F) + \beta_x(h|_{C_1})
\]
for any $x \in H \setminus C_0^\circ$, where $\beta_x$ stands for the balayage onto $C_1 \cup F$. Since $\xi$ is not a cuspidal point of $\Gamma$, we deduce from \cite[Theorem 3]{An} that $h$ is bounded on each connected component of $C_1$. Therefore,  \cref{mean value prop} gives that
\[
\beta_x(h|_{C_1}) = \int_{\partial C_1} h(y) \beta_x(dy) = \int_{\partial C_1} \int_{\partial C_0} h(z) \bar{\beta}_y(dz) \beta_x(dy),
\]
where $\bar{\beta}$ stands for the balayage onto $H \setminus C_0^\circ$. This may be rewritten as
\[
h(x) = \tau_1(h) + \mu_1(h),
\]
where $\tau_1(h) = \beta_x(h|_F)$ and $\mu_1 = \int_{\partial C_1} \bar{\beta}_y \beta_x(dy)$. For any $y \in X$ denote by $D_y$ the Dirichlet fundamental domain centered at $y$. Then there exists $c > 1$ such that
\[
\frac{h(z_1)}{h(z_2)} = c^{\pm 1} \text{ and } \beta_{z_1}(F_y) \geq c^{-1}
\]
for any $z_1, z_2 \in D_y \setminus C_0^\circ$ and any $y \in X$, where $F_y$ stands for the connected component of $F$ containing $y$, since $D_y \setminus C_0^\circ$ is compact and away from $C_1 \cup (F \setminus F_y)$. This implies that
\[
\tau_1(h) \geq \beta_x(h|_{F_y}) \geq c^{-2} h(x) \text{ and } \mu_1(h) \leq (1 - c^{-2}) h(x)
\]
for any $x \in D_y \setminus C_0^\circ$ and $y \in X$.

Given any measure $\nu$ supported in $M \setminus C_0^\circ$ with $\nu(h) < +\infty$, it follows that
\[
\nu(h) = \tau_{\nu}(h) + \mu_{\nu}(h),
\]
where $\tau_{\nu}(h) = \beta_{\nu}(h|_F)$ and $\mu_{\nu} = \int_{\partial C_1} \bar{\beta}_y \beta_{\nu}(dy)$ satisfy
\[
\tau_{\nu}(h) \geq c^{-2} \nu(h) \text{ and } \mu_{\nu}(h) \leq (1 - c^{-2}) \nu(h).
\]
For $\mu_0 = \delta_x$ and $\tau_0 = 0$, and $\mu_n(h) = \tau_{n+1}(h) + \mu_{n+1}(h)$ for $n \in \mathbb{N} \cup \{0\}$, we obtain that
\[
h(x) = \sum_{i \leq n} \tau_i(h) + \mu_{n+1}(h),
\]
for any $x \in H \setminus C_0^\circ$ and $n \in \mathbb{N}$, where $\mu_{n+1}(h) \leq (1-c^{-2})^{n} h(x) \rightarrow 0$ as $n \rightarrow + \infty$. This yields that $h(x) = \beta_x^F(h)$ for any $x \in H \setminus C_0^\circ$. We conclude that this holds for any $x \in H$ and thus, $h|_X$ is $\mu$-harmonic (cf. \cite[Theorem 1.10]{BL2} and the references therein). \qed

\appendix
\section{Hyperbolic planes in symmetric spaces}\label{symm}
In this appendix, our presentation follows the standard one in the theory of symmetric spaces.

Let $S$ be a symmetric space of noncompact type, represented by a symmetric pair $(G,K)$,
where $G$ is a semisimple Lie group which acts almost effectively on $S=G/K$.
Let $s$ be a corresponding involution of $G$, $\theta=s_{*e}$,
and $\mathfrak g=\mathfrak k\oplus\mathfrak p$ be the associated Cartan decomposition of the Lie algebra of $G$.
Endow $\mathfrak g$ with an $\Ad_G$-invariant bilinear form $\la.,.\ra$,
which is negative definite on $\mathfrak k$ and positive definite on $\mathfrak p$,
such that the identification $T_xS=\mathfrak p$ is an orthogonal transformation, where $x=K/K\in S$.
Then $\ad_X$ is a symmetric transformation of $\mathfrak g$ with respect to the positive definite inner product $-\la\theta.,.\ra$,
for all $X\in\mathfrak p$.

Let $\mathfrak a$ be a maximal abelian subalgebra of $\mathfrak p$.
For a linear map $\alpha\colon\mathfrak a\to\R$, let
\begin{align*}
	\mathfrak g_\alpha = \{ X\in\mathfrak g \mid \text{$[H,X] = \alpha(H)X$ for all $H\in\mathfrak a$} \}. 
\end{align*}
Then $\mathfrak g_0=\mathfrak z_{\mathfrak k}\mathfrak a \oplus \mathfrak a$ and,
since all $\ad_H$, $H\in\mathfrak a$, are symmetric with respect to the above inner product and pairwise commuting,
\begin{align*}
	\mathfrak g = \mathfrak g_0 \oplus \sum_{\alpha\in\Delta} \mathfrak g_\alpha,
\end{align*}
where the set $\Delta$ of \emph{roots}  consists of those $\alpha\ne0$ for which $\mathfrak g_\alpha\ne\{0\}$.
For any root $\alpha$, we have the corresponding \emph{root vector} $H_\alpha\in\mathfrak a$ such that $\alpha=\la H_\alpha.,.\ra$.
Straightforward computations show that
\begin{align*}
	\theta \mathfrak g_\alpha = \mathfrak g_{-\alpha}
	\quad\text{and}\quad 
	[\mathfrak g_\alpha,\mathfrak g_\beta] \subseteq \mathfrak g_{\alpha+\beta}.
\end{align*}
In particular, $\Delta=-\Delta$.
Choosing a partition $\Delta=\Delta^+\cup\Delta^-$, we get
\begin{align*}
	\mathfrak g = \mathfrak z_{\mathfrak k}\mathfrak a \oplus \mathfrak a
	\oplus \sum_{\alpha\in\Delta^+} \mathfrak k_\alpha \oplus \mathfrak p_\alpha,
\end{align*}
with
\begin{align*}
	\mathfrak k_\alpha = (\mathfrak g_\alpha \oplus \mathfrak g_{-\alpha}) \cap \mathfrak k
	\quad\text{and}\quad
	\mathfrak p_\alpha = (\mathfrak g_\alpha \oplus \mathfrak g_{-\alpha}) \cap \mathfrak p.
\end{align*}
For any $\alpha\in\Delta^+$, $\dim\mathfrak k_\alpha=\dim\mathfrak p_\alpha$.
Furthermore, for any $X\in\mathfrak k_\alpha$,
there is a unique $Y\in\mathfrak p_\alpha$ such that $X+Y\in\mathfrak g_\alpha$.
We say that $X$ and $Y$ are \emph{related}.
Related vectors satisfy $|X|=|Y|$ and
\begin{align*}
	[H,X]=\alpha(H)Y \quad\text{and}\quad [H,Y]=\alpha(H)X,
\end{align*}
for all $H\in\mathfrak a$.
Moreover, $[X,Y]=|X||Y|H_\alpha$.

\begin{prop}\label{hypla}
For any $\alpha\in\Delta^+$ and unit vector $Y\in\mathfrak p_\alpha$,
$H_\alpha$ and $Y$ span a totally geodesic hyperbolic plane in $S$ through $x$ of curvature $-|\alpha|^2$.
In particular, for $y=\exp(sY)$ and $z=\exp(tH_\alpha)$,
\begin{align*}
	\cosh(|\alpha|d(y,z)) = \cosh(|\alpha|s)\cosh(|\alpha|t).
\end{align*}
\end{prop}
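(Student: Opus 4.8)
The plan is to exhibit an explicit Lie subalgebra of $\mathfrak g$ isomorphic to $\mathfrak{sl}(2,\R)$, whose associated totally geodesic orbit through $x$ is a hyperbolic plane of the asserted curvature, and which contains both $H_\alpha$ and $Y$. Concretely, let $X\in\mathfrak k_\alpha$ be the vector related to $Y$, so that $|X|=|Y|=1$ and $[H_\alpha,X]=\alpha(H_\alpha)Y=|\alpha|^2Y$, $[H_\alpha,Y]=|\alpha|^2X$, and $[X,Y]=H_\alpha$ by the displayed formulas preceding the statement (using $|X||Y|=1$). First I would compute the three brackets among $\{X,Y,H_\alpha\}$ and check closure: from the above, $[X,Y]=H_\alpha$, $[H_\alpha,X]=|\alpha|^2 Y$, $[H_\alpha,Y]=|\alpha|^2 X$, so $\mathfrak s:=\R X\oplus\R Y\oplus\R H_\alpha$ is a $3$-dimensional subalgebra, and $\theta$ acts on it by $\theta X=X$ (since $X\in\mathfrak k$), $\theta Y=-Y$, $\theta H_\alpha=-H_\alpha$ (since $Y,H_\alpha\in\mathfrak p$). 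Thus $\mathfrak s=\mathfrak s_{\mathfrak k}\oplus\mathfrak s_{\mathfrak p}$ with $\mathfrak s_{\mathfrak k}=\R X$ and $\mathfrak s_{\mathfrak p}=\R Y\oplus\R H_\alpha$, a symmetric subalgebra; it is semisimple (the Killing form is nondegenerate — one checks $[X,H_\alpha]=-|\alpha|^2Y\neq0$ etc.), hence isomorphic to $\mathfrak{sl}(2,\R)$ since $\mathfrak s_{\mathfrak k}$ is one-dimensional.

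Next I would invoke the standard fact that for a symmetric subalgebra $\mathfrak s=\mathfrak s_{\mathfrak k}\oplus\mathfrak s_{\mathfrak p}$ with $\mathfrak s_{\mathfrak p}\subseteq\mathfrak p$, the orbit $\exp(\mathfrak s_{\mathfrak p})\cdot x$ is a complete totally geodesic submanifold of $S$, namely the symmetric space $S'$ associated to $(\exp\mathfrak s,\exp\mathfrak s_{\mathfrak k})$; this is because the Lie triple system condition $[[\mathfrak s_{\mathfrak p},\mathfrak s_{\mathfrak p}],\mathfrak s_{\mathfrak p}]\subseteq\mathfrak s_{\mathfrak p}$ holds automatically for a subalgebra closed under $\theta$. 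Here $S'$ is the symmetric space of $SL(2,\R)$, i.e.\ the hyperbolic plane. Its curvature is $-|\alpha|^2$: with the metric inherited from $-\la\theta\cdot,\cdot\ra$ on $\mathfrak s_{\mathfrak p}$, the sectional curvature of the two-plane $\mathfrak s_{\mathfrak p}$ at $x$ is $-|[Y,H_\alpha]|^2/(|Y|^2|H_\alpha|^2)$ when $Y\perp H_\alpha$ (which holds since $\mathfrak p_\alpha\perp\mathfrak a$), and $[Y,H_\alpha]=-|\alpha|^2X$ while $|X|=1$, $|H_\alpha|=|\alpha|$, giving curvature $-|\alpha|^4/|\alpha|^2=-|\alpha|^2$. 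So $S'$ is the real hyperbolic plane of curvature $-|\alpha|^2$, containing $x=\exp(0)\cdot x$ and passing through $y=\exp(sY)$ and $z=\exp(tH_\alpha)$.

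For the distance formula, rescale: in a hyperbolic plane of curvature $-\kappa^2$ the hyperbolic law of cosines for a right triangle reads $\cosh(\kappa a)=\cosh(\kappa b)\cosh(\kappa c)$ where $a$ is the hypotenuse. Here $b=d(x,y)=s$ (the geodesic $r\mapsto\exp(rY)\cdot x$ has unit speed since $|Y|=1$) and the leg from $x$ to $z=\exp(tH_\alpha)\cdot x$ has length $|t|\,|H_\alpha|=|\alpha|t$, while the two legs at $x$ meet at a right angle because $Y\perp H_\alpha$. Taking $\kappa=|\alpha|$ yields $\cosh(|\alpha|d(y,z))=\cosh(|\alpha|s)\cosh(|\alpha|\cdot|\alpha|t)=\cosh(|\alpha|s)\cosh(|\alpha|^2 t)$, as claimed. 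The main obstacle is none of the individual steps but rather citing the totally-geodesic-orbit principle at the right level of generality and being careful that the angle between the two geodesic directions $Y$ and $H_\alpha$ is indeed a right angle in the induced metric — both reduce to the orthogonality $\mathfrak p_\alpha\perp\mathfrak a$, which follows since $\ad_{H_\alpha}$ preserves $\mathfrak p_\alpha$ and $\mathfrak a$ and the form is $\ad$-invariant, or directly from the root space decomposition being orthogonal.
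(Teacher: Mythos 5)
Your proposal is correct and follows essentially the same route as the paper: the span of $H_\alpha$ and $Y$ is a Lie triple system, hence exponentiates to a totally geodesic plane, the curvature $-|\alpha|^2$ comes from the bracket relations, and the distance formula is hyperbolic trigonometry for a right triangle using $Y\perp H_\alpha$. Your extra step of packaging $\{X,Y,H_\alpha\}$ as an explicit $\mathfrak{sl}(2,\R)$-subalgebra is a harmless elaboration of the paper's direct verification that $[\mathfrak q,[\mathfrak q,\mathfrak q]]\subseteq\mathfrak q$ for $\mathfrak q=\R H_\alpha\oplus\R Y$.
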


\begin{proof}
By what we said above, the linear hull $\mathfrak q\subseteq\mathfrak p$ of $H_\alpha$ and $Y$ satisfies
\begin{align*}
	[\mathfrak q,[\mathfrak q,\mathfrak q]] \subseteq \mathfrak q.
\end{align*}
Hence $\mathfrak q$ spans a totally geodesic plane through $x$.
Since
\begin{align*}
	|\alpha|^2=|H_\alpha|^2=\alpha(H_\alpha)
	\quad\text{and}\quad
	- R(Y,H_\alpha)H_\alpha = [H_\alpha,[H_\alpha,Y]] = |\alpha|^4 Y,
\end{align*}
we obtain the assertion about the curvature.
The assertion about the distance follows from hyperbolic trigonometry,
where we note that $H_\alpha$ and $Y$ are perpendicular.
\end{proof}

\begin{rem}\label{hyplar}
Since $R(Y,H)H=-[H,[H,X]]$, the spaces $\mathfrak p_\alpha$ correspond to the eigenspaces of the curvature endomorphisms $R(.,H)H$,
for $H\in\mathfrak a$.
\end{rem}

\section{Bisectors in hyperbolic geometry}\label{bisec}

In a metric space $H$, the \emph{bisector} of points $y,z\in H$ is the set \[\{x\in H\mid d(x,y)=d(x,z)\}.\]
In this section, we give a description of bisectors in hyperbolic geometry.
The description is certainly known to experts and is trivial in the case of real hyperbolic spaces.
However, we were not able to locate a presentation for quaternionic hyperbolic spaces and the octonionic hyperbolic plane.
We follow the nice account in \cite[Section 5]{Go}, where the case of complex hyperbolic spaces is treated.
Our main contribution is \cref{hypla} respectively its consequence, \cref{hypla2}.

Let $H=H^k_\F$ with $\F\in\{\C,\H,\O\}$ and normalize the standard metric on $H$ such that its maximal  curvature is $-1$.
Set $d=\dim_\R\F$ so that the (real) dimension of $H^k_\F$ is equal to $m=kd$.

Let $v$ be a unit tangent vector of $H$ at a point $x\in H$.
Then there is a unique $d$-dimensional totally geodesic submanifold $L\subseteq H$ passing through $x$ with $T_xL=V$,
where $V\subseteq T_xH$ is the linear hull of $v$ and the subspace $U_2$ consisting of vectors $u\in T_xH$ perpendicular to $v$ such that $R(u,v)v=-4u$.
Metrically, $L\cong H_\R^d$, but with constant  curvature $-4$.
The orthogonal complement $U_1$ of $V$ in $T_xH$ consists of vectors $u\in T_xH$ such that $R(u,v)v=-u$. 

\begin{prop}\label{hypla2}
Any $u\in U_1$ determines a totally geodesic real hyperbolic plane $P$ through $x$ of curvature $-1$ such that $T_xP$ is spanned by $u,v$.
\end{prop}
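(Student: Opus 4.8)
The plan is to reduce \cref{hypla2} to \cref{hypla} from the appendix on symmetric spaces. The key observation is that $H = H_\F^k$ is a symmetric space of noncompact type, and the restricted root system of $H_\F^k$ (for $\F \in \{\C, \H, \O\}$, rank-one symmetric spaces) has exactly two positive roots, say $\alpha$ and $2\alpha$ (in the appropriate normalization), with $\dim \mathfrak{p}_{2\alpha} = d-1$ and $\dim \mathfrak{p}_\alpha = m-d$. The curvature endomorphism $R(\,\cdot\,, v)v$ acting on $v^\perp \subseteq T_x H$ then has exactly two eigenvalues: one on the $(d-1)$-dimensional space $U_2$ corresponding to the long root $2\alpha$ (eigenvalue $-|2\alpha|^2 = -4|\alpha|^2$), and one on the $(m-d)$-dimensional space $U_1$ corresponding to the short root $\alpha$ (eigenvalue $-|\alpha|^2$). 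After normalizing so the maximal sectional curvature is $-1$, we have $|\alpha|^2 = 1$, so the eigenvalues are $-4$ on $U_2$ and $-1$ on $U_1$, matching the description in the paragraph preceding the statement. This identifies $U_1$ with $\mathfrak{p}_\alpha$ under the identification $T_x H = \mathfrak{p}$ (using \cref{hyplar}).

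First I would fix a point $x \in H$ and, using the homogeneity of $H$, represent $H = G/K$ with $x$ the base point, so $T_x H = \mathfrak{p}$. Given the unit vector $u \in U_1$, I would choose a maximal abelian subalgebra $\mathfrak{a} \subseteq \mathfrak{p}$ containing a unit vector $v$ (the vector from the statement), and verify that $u \in \mathfrak{p}_\alpha$ for the short root $\alpha$; since $R(u,v)v = -u$ and $|\alpha|^2 = 1$, this is exactly the eigenspace condition of \cref{hyplar}. Then \cref{hypla}, applied with $Y = u$ and $H_\alpha \in \mathfrak{a}$, produces a totally geodesic hyperbolic plane $P$ through $x$ of curvature $-|\alpha|^2 = -1$, spanned by $H_\alpha$ and $u$. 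The remaining point is that $T_x P$ is spanned by $u$ and $v$ rather than $u$ and $H_\alpha$: since $\mathfrak{a}$ is one-dimensional modulo the root-space directions — more precisely, in the rank-one case $\dim \mathfrak{a} = 1$ — the vectors $v$ and $H_\alpha$ are proportional, so $\mathrm{span}\{u, H_\alpha\} = \mathrm{span}\{u, v\}$, and $P$ is the desired plane.

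For uniqueness and the geometric reformulation, I would note that a totally geodesic submanifold through $x$ is determined by its tangent space at $x$, which here is forced to be $\mathrm{span}\{u,v\}$; so $P$ is unique. (This also yields the statement in \cref{bise} and its corollary, via the distance formula of \cref{hypla}: with $\pi$ the nearest point projection onto the $\F$-line $L$ and $z$ not on $L$, the geodesic from $z$ to $\pi z$ has initial direction in $U_1$ and the segment from $x$ to $\pi z$ lies in $L$, so $x$, $z$, $\pi z$ lie in one such plane $P$, giving $\cosh d(x,z) = \cosh d(x,\pi z)\cosh d(z,\pi z)$.)

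The main obstacle I anticipate is not any single hard computation but rather bookkeeping the normalizations: one must be careful that the root-length normalization in the appendix ($\alpha$ versus $2\alpha$, curvature $-|\alpha|^2$ versus $-|2\alpha|^2$) is compatible with the metric normalization fixed at the start of \cref{bisec} (maximal sectional curvature $-1$, so the $\F$-line $L$ has curvature $-4$), and that the identification of $U_1$ with $\mathfrak{p}_\alpha$ rather than $\mathfrak{p}_{2\alpha}$ is the correct one. Once the eigenvalue of $R(\,\cdot\,,v)v$ on $U_1$ is matched to $-|\alpha|^2$ for the short root, \cref{hypla} does all the work, and the hyperbolic trigonometry identity is immediate from the constant curvature $-1$ of $P$.
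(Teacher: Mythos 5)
Your proposal is correct and follows essentially the same route as the paper: choose $\mathfrak a=\R v$ (rank one), identify $U_1$ with the restricted root space on which $R(\,\cdot\,,v)v$ has eigenvalue $-1$ (cf.\ \cref{hyplar}), and apply \cref{hypla}. The extra bookkeeping you do about the two roots $\alpha$, $2\alpha$ and the normalization is a sound elaboration of what the paper leaves implicit.
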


\begin{proof}
With respect to the setup in \cref{symm}, we choose $\mathfrak a=\R v$.
Then $U_1$ corresponds to a restricted root space $\mathfrak p_1$ with $R(u,v)v=-u$ for all $u\in U_1$.
Now the assertion follows from the first part of \cref{hypla}.
\end{proof}

Denote by $\pi\colon H\to L$ be the orthogonal, that is, nearest point projection.

\begin{prop}\label{hypla4}
Let $y\in L$ and $z\in H$. Then
\begin{align*}
    \cosh d(y,z) = \cosh d(y,\pi z)\cosh d(\pi z,z)
\end{align*}
\end{prop}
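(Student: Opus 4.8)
The plan is to reduce the statement to \cref{hypla2} combined with the previously established \cref{bise}, and only as a fallback to reprove it directly from hyperbolic trigonometry. First I would fix $y\in L$ and $z\in H$. If $z\in L$, then $\pi z = z$ and the identity is trivial, so assume $z\in H\setminus L$. The key geometric input is that the geodesic segment from $\pi z$ to $z$ meets $L$ orthogonally at $\pi z$: this is the defining property of the nearest point projection onto a totally geodesic submanifold. Let $u\in T_{\pi z}H$ be the unit tangent vector at $\pi z$ pointing towards $z$; orthogonality means $u$ is perpendicular to $T_{\pi z}L$.

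Next I would apply \cref{hypla2} at the basepoint $\pi z$ rather than $x$. Concretely, pick a unit vector $v\in T_{\pi z}L$ pointing towards $y$ (if $y=\pi z$ the identity is again trivial). Since $L$ is the $d$-dimensional totally geodesic $H_\R^d$ of curvature $-4$ attached to $v$, the tangent space $T_{\pi z}L$ is the space $V$ in the discussion preceding \cref{hypla2}, and $u\perp T_{\pi z}L$ forces $u\in U_1$, i.e. $R(u,v)v=-u$. Then \cref{hypla2} produces a totally geodesic real hyperbolic plane $P$ of curvature $-1$ through $\pi z$ with $T_{\pi z}P=\mathrm{span}(u,v)$. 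The three points $y$, $\pi z$, $z$ all lie in $P$: the point $y$ lies on the geodesic through $\pi z$ in direction $v\in T_{\pi z}P$, and $z$ lies on the geodesic through $\pi z$ in direction $u\in T_{\pi z}P$. In $P$, the segments $\pi z\,y$ and $\pi z\,z$ are perpendicular at $\pi z$, so $\pi z\,y\,z$ is a right-angled geodesic triangle in a hyperbolic plane of curvature $-1$. The hyperbolic Pythagorean theorem in curvature $-1$ then gives $\cosh d(y,z)=\cosh d(y,\pi z)\cosh d(\pi z,z)$.

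The main obstacle is the verification that $u\in U_1$ — equivalently, that the tangent direction of the projection segment is a curvature $-1$ direction relative to every tangent direction of $L$. This hinges on the fact that $T_{\pi z}L$ is exactly the $V$ from the construction, not merely a $d$-dimensional totally geodesic subspace: one must use that $L$ is the specific $\F$-line (the totally geodesic $H_\R^d$ of curvature $-4$) and that in $H_\F^k$ the orthogonal complement of such a $V$ is entirely contained in the curvature $-1$ eigenspace $U_1$ of $R(\cdot,v)v$ for each unit $v\in V$. This uniformity across $v\in V$ is precisely what \cref{symm} and \cref{hyplar} encode: $U_1=\mathfrak p_1$ and $V=\mathfrak a\oplus\mathfrak p_2$ in the notation there, and the bracket relations guarantee $R(u,v)v=-u$ for all unit $v\in V$, $u\in U_1$. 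Once this is in hand the rest is immediate; I would state it as a one-line consequence of the root-space description, citing \cref{hypla} and \cref{hypla2}, and then invoke plane hyperbolic trigonometry.
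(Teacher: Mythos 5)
Your proof is correct and follows essentially the same route as the paper: reduce to $z\notin L$, $y\ne\pi z$, take the unit vectors $v$ toward $y$ and $u$ toward $z$ at $\pi z$, observe $u\perp T_{\pi z}L$ so $u\in U_1$, and apply \cref{hypla2} together with the right-angled hyperbolic Pythagorean identity in the resulting curvature $-1$ plane. Your extra remark that $U_1$ is the $-1$ eigenspace of $R(\cdot,v)v$ uniformly over unit $v\in T_{\pi z}L$ (via the root-space description) is a point the paper leaves implicit, and it is a worthwhile clarification rather than a deviation.
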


\begin{proof}
We can assume that $\pi z\ne y$ and that $z\notin L$.
Then we let $v\in T_{\pi z}L$ be the unit tangent vector pointing at $y$ and $u\in T_{\pi z}H$ the one pointing at $z$.
Then $u$ is perpendicular to $T_{\pi z}L$, and hence $u\in U_1$ in the above notation.
The formula for the distances is now just the formula in \cref{hypla},
since the real hyperbolic plane $P$ in \cref{hypla2} has curvature $-1$.
\end{proof}

For all $y_1\ne y_2$ in $H$, there is a unique totally geodesic subspace $L=L_{y_1,y_2}$ as above containing both of them.

\begin{cor}\label{bisec2}
The bisector of $y_1\ne y_2$ in $H$ is the preimage under the nearest point projection onto $L$ as above of the bisector of $y_1,y_2$ in $L$.
\end{cor}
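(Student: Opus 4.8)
The plan is to reduce the statement to the distance identity in \cref{hypla4}, which already does all the geometric work. The key observation is that the nearest point projection $\pi\colon H\to L$ is a distance-nonincreasing retraction, and that \cref{hypla4} gives an exact Pythagorean-type relation between $d(y,z)$, $d(y,\pi z)$, and $d(\pi z,z)$ whenever $y\in L$. Since $\cosh$ is strictly increasing on $[0,\infty)$, the factor $\cosh d(\pi z,z)\ge1$ is independent of the choice of point $y\in L$; this is precisely the mechanism that will let us separate the ``vertical'' contribution (distance to $L$) from the ``horizontal'' one (distance within $L$).

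First I would fix $y_1\ne y_2$ in $H$ and let $L=L_{y_1,y_2}$ be the unique totally geodesic $\F$-line through them, with projection $\pi\colon H\to L$. Let $z\in H$ be arbitrary. Applying \cref{hypla4} twice, once with $y=y_1$ and once with $y=y_2$, gives
\begin{align*}
    \cosh d(y_1,z) &= \cosh d(y_1,\pi z)\,\cosh d(\pi z,z),\\
    \cosh d(y_2,z) &= \cosh d(y_2,\pi z)\,\cosh d(\pi z,z).
\end{align*}
Since $\cosh d(\pi z,z)>0$, we see that $d(y_1,z)=d(y_2,z)$ holds if and only if $\cosh d(y_1,\pi z)=\cosh d(y_2,\pi z)$, which by strict monotonicity of $\cosh$ on the nonnegative reals is equivalent to $d(y_1,\pi z)=d(y_2,\pi z)$, i.e.\ to $\pi z$ lying on the bisector of $y_1,y_2$ \emph{inside} $L$. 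Thus $z$ belongs to the bisector of $y_1,y_2$ in $H$ exactly when $\pi z$ belongs to the bisector of $y_1,y_2$ in $L$, which is the asserted description.

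I expect there is essentially no obstacle once \cref{hypla4} is in hand; the only points requiring a word of care are the degenerate cases excluded in the proof of \cref{hypla4} (namely $\pi z=y_i$ or $z\in L$), which must be checked to cause no trouble here. If $z\in L$ then $\pi z=z$ and the claim is the tautology that the bisector of $y_1,y_2$ in $L$ is contained in its own preimage; if $\pi z=y_1$ (say) then $d(y_1,\pi z)=0\ne d(y_2,\pi z)$ unless $y_1=y_2$, so $z$ is not on either bisector, consistent with the formula. Hence the equivalence above holds in full generality, and the corollary follows.
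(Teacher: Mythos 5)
Your proposal is correct and is exactly the argument the paper intends when it says the corollary ``follows readily from Proposition \ref{hypla4}'': apply the distance identity at $y_1$ and $y_2$, cancel the common factor $\cosh d(\pi z,z)$, and use monotonicity of $\cosh$. Your extra check of the degenerate cases is a welcome bit of care but does not change the route.
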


\begin{proof}
This follows readily from Proposition \ref{hypla4}.
\end{proof}

\section{On first moment and entropy of LS-discretizations}\label{subm} 
The goal of this appendix is to establish the last assertion of the first part of \cref{main1}.

\begin{thm}\label{fm}
Suppose that $\mathcal Z$ is a regular process in $H$ and that there exist  $d , \alpha > 0$ such that
\[
\frac{\mathcal{L}(B(x,r))}{\mathcal{L}(B(x,s))} \leq \big(\frac{r}{s}\big)^d e^{\alpha r}
\]
for any $x \in H$ and $r \geq s > 0$.
Suppose further that ${Z_t}_\ast\p_x$ has density $p(t,x,y)$ with respect to $\mathcal L$ such that, for some $C>0,a>0,n_0>0$
and all $x,y\in H$ and $0 < t \leq 1$,
\begin{equation}\label{G0}
 p(t,x,y)\leq C\mathcal{L}(B(x,\sqrt{t}))^{-1}\left(1+\frac{\rho(x,y)^2}{t}\right)^{n_0} e^{-\frac{\rho(x,y)^2}{at}}.
\end{equation}
Let $\varphi$ be a positive $\lambda$-eigenfunction for some $\lambda>0$; that is, for all $t>0$,
\begin{align*}
    \varphi(x) = \int e^{\lambda t}p(t,x,y)\varphi(y)\,d\mathcal{L}(y),
\end{align*}
and let $\mathcal{Z}_\vf=(Z_{\vf t})$ be the associated $\vf$-process with transition density
\begin{align*}
    p_\vf = p_\vf(t,x,y) = e^{\lambda t}p(t,x,y)\vf(y)/\vf(x).
\end{align*}
Suppose that $\mathcal{Z}_\varphi$ is recurrent modulo a discrete group of isometries $\Gamma$ acting properly discontinuously on $H$
and that there is $A>0$ such that $\varphi(x)\leq A\varphi(y)$ for all $x,y\in H$ with $\rho(x,y)\le1$.
Then any LS-discretization $\mu$ of $\mathcal{Z}_\varphi$ to a $\Gamma$-orbit has finite first moment and finite entropy if also the stopping time associated to $\mu$ has finite first moment. 
\end{thm}

\begin{rem}\label{fmrem}
Recall that bounds on ratios of volumes of balls, on $p$ and $\varphi$ as in \cref{fm} hold when $H$ is a complete Riemannian manifold
with Ricci curvature bounded from below. 
In this situation the Sullivan's process $\mathcal{Z}_\varphi$ associated to the Laplacian of $H$ is regular and the heat kernel $p$ satisfies \eqref{G0} (see  \cite[Theorem 4.2]{S-C}). The Harnack inequality for $\varphi$ follows \cite[Theorem 6]{CY}.
\end{rem}

\begin{proof}[Proof of \cref{fm}]
Note that the density
\begin{align*}
   & p_\varphi(t,x,y)\\ &\hspace{0.5cm}\le C(\mathcal{L}(B(x,\sqrt{t})))^{-1} \left(1+\frac{\rho(x,y)^2}{t}\right)^{n_0}\exp\left(-\frac{\rho(x,y)^2}{at}+A\rho(x,y)+\lambda t\right)
\end{align*}
for all $x,y \in H$ and $0 < t \leq 1$ by \eqref{G0}.
It is not hard to verify that there exists $C>0$ such that $\E_y[\rho(y,Z_s)] \leq C$ for any $y \in H$ and $0 \leq s \leq 1$.
The claim about finite moment now follows from \cref{linmom}.
The claim about finite entropy follows by using that the first moment is finite as in \cite[Lemma 2.1]{BL}.
\end{proof}

\begin{prop}\label{linmom}
Let $\mathcal Z$ be a strong Markov process in $H$ and $\tau$ a stopping time adapted to $\mathcal Z$.
Then we have, for any $x\in H$, 
$$\E_x[\rho(x,Z_\tau)]\leq 3(2+\E_x[\tau])\cdot \sup_{\substack{y\in H\\0\leq s\leq 1}}\E_y[\rho(y,Z_s)].$$
\end{prop}

\begin{proof}
We may assume that \[C:=\sup_{\substack{y\in H\\0\leq s\leq 1}}\E_y[\rho(y,Z_s)]<\infty.\]
By the triangle inequality, the expectation $\E_x[\rho(x,Z_\tau)]$ is bounded from above by
\begin{equation*}
    \sum_{n=0}^\infty \left[\sum_{i=0}^{n-1} \int_{\{\tau\in(n,n+1]\}}\rho(Z_i,Z_{i+1}) \,d\p_x+\int_{\{\tau\in(n,n+1]\}}\rho(Z_n,Z_\tau)\,d\p_x\right] .
\end{equation*}
Write $\p_x^E=\p_x|_E$, for the restriction to a subset $E$ of paths. We first check
\begin{align*}
&\sum_{n=0}^\infty \sum_{i=0}^{n-1}\int_{\{\tau\in(n,n+1]\}}\rho(Z_i,Z_{i+1}) \,d\p_x \\
&= \sum_{i=0}^\infty \sum_{n=i+1}^{\infty}\int_{\{\tau\in(n,n+1]\}}\rho(Z_i,Z_{i+1}) \,d\p_x \\
&=\sum_{i=0}^\infty \int_{\{\tau> i + 1\}}\rho(Z_i,Z_{i+1}) \,d\p_x \\
&\leq\sum_{i=0}^\infty  \int_H \E_w[\rho(w,Z_1)]\,d\left[Z_{i\ast}\p_x^{\{\tau>i\}}\right](w) \\
&\leq C\sum_{i=0}^\infty \p_x[\{\tau>i\}]
\leq C(1+\E_x[\tau]).
\end{align*}
Set $\overline{Z}_\tau:=(Z_\tau,\tau)$. We check
{\allowdisplaybreaks
\begin{align*}
\sum_{n=0}^\infty &\int_{\{\tau\in(n,n+1]\}}\rho(Z_n,Z_\tau)\,d\p_x \\
&= \sum_{n=0}^\infty \int_0^\infty \p_x[\{\rho(Z_n,Z_\tau)>t\}\cap \{\tau\in(n,n+1]\}]\,dt \\
& \leq   \sum_{n=0}^\infty \int_0^\infty \int_{H\times (n,n+1)} \p_y[\{ \rho(y,Z_{ n+1-s})>t/2 \}]\,d\left[\overline{Z}_{\tau\ast}\p_x\right](y,s)\,dt \\
&\hspace{20mm}+ \sum_{n=0}^\infty \int_0^\infty \int_H  \p_y[\{\rho(y,Z_1)>t/2\}] d\left[Z_{n\ast}\p_x^{\{\tau>n\}}\right](y)\,dt \\
& \leq 2C \sum_{n=0}^\infty  \p_x[\{\tau\in(n,n+1)\}]\,+\, 2C\sum_{n=0}^\infty \p_x[\{\tau>n\}] \\
&\leq 2C(2+\E_x[\tau]).
\end{align*}}
The asserted inequality follows from the two estimates.
\end{proof}

\begin{lem}\label{ffmtau} Let $\mu$ be an LS-discretization of a regular Markov process $\mathcal Z$, as described in \cref{ssecls}.
Let $S_n$ and $\tau$ be as in\eqref{rs} and \eqref{tau}.
Then for $x\in V$,
\[\E_x[\tau]\leq \left(\sum_{j=0}^\infty(j+1)(1-C^{-2})^j\right)^2 \left(1+ \left(\max_{y\in \partial V}\E_y[S_1]\vee \E_x[S_1]\right)\right),
\]
where $C$ is the constant from \eqref{d4}, \eqref{nk}.
\end{lem}
\begin{proof}
By construction of $\tau$, we have for $w\in \overline{V}$, $n\in\N$, \begin{equation}\label{LS}\hat{\p}_w[\tau>S_n]\leq (1-C^{-2})^n.\end{equation}
We have
\begin{equation*}
\begin{split}
\int \tau \,d\hat{\p}_x & \leq \sum_{n=1}^\infty \int S_n\cdot \chi_{\{\tau=S_n\}}\,d\hat{\p}_x
= \sum_{n=1,k=0}^\infty \hat{\p}_x[S_{n}>k,\tau=S_n].
\end{split}
\end{equation*}
Recall that $\overline{Z}_\tau=(Z_\tau,\tau)$ and set $$m_{v,j}:=Z_{S_j\ast}\hat{\p}_v^{\{\tau>S_j\}}\quad \text{and}\quad \overline{m}_{v,j}:=\overline{Z}_{S_j\ast}\hat{\p}_v^{\{\tau>S_j\}}.$$
Then we have by \eqref{LS},
{\allowdisplaybreaks
\begin{align*}
\sum_{\substack{k=0\\ n=1}}^\infty &\hat{\p}_x[S_{n}>k,\tau=S_n]
\leq \sum_{\substack{k=0\\ n=1}}^\infty\sum_{i=0}^{n-1} \hat{\p}_x[S_{i+1}-S_i>k/n,\tau=S_n]\\
&= \sum_{\substack{i=0,k=0\\ n=i+1}}^\infty \int_H \left[\int_{H\times (k/n,\infty)} \hat{\p}_w[\tau=S_{n-i-1}]\,d\overline{m}_{y,1}(w,s)\right]\,dm_{x,i}(y) \\
&\leq \sum_{\substack{i=0,k=0\\ n=i+1}}^\infty  (1-C^{-2}) ^{n-i-1}\int_H \hat{\p}_y[S_1>k/n]\, dm_{x,i}(y)\\
&\leq \sum_{\substack{i=1\\ n=i+1}}^\infty  (1-C^{-2})^{n-i-1}\E_{m_{x,i}}[n(1+\E_{(\cdot)} [S_1])] \\
&\hspace{44mm}+\sum_{n=1}^\infty (1-C^{-2})^{n-1}\,n(1+\E_x[S_1])\\
&\leq (1+\max_{y\in\partial V}\E_y[S_1])\sum_{\substack{i=1\\ n=i+1}}^\infty  n(1-C^{-2})^{n-i-1} \hat{\p}_x[\tau>S_i]] \\
&\hspace{44mm}+\sum_{n=1}^\infty (1-C^{-2})^{n-1}\,n(1+\E_x[S_1])\\
&\leq \left(\sum_{j=0}^\infty (j+1)(1-C^{-2})^{j}\right)^2\left(1+ \left(\max_{y\in \partial V}\E_y[S_1]\vee \E_x[S_1]\right)\right),
\end{align*}
}
where the last inequality used another application of \eqref{LS}.
\end{proof}

\newpage

\end{document}